\newcommand{\Arr}{\mathcal{A}}
\newcommand{\arr}{\mathcal{A}}
\newcommand{\CC}{\mathbb{C}}
\newcommand{\CP}{\mathbb{CP}}
\newcommand{\ZZ}{\mathbb{Z}}
\newcommand{\Ker}{\text{Ker}}
\newcommand{\arA}{\mathcal{A}}
\newcommand{\FF}{{\mathbb F}}
\newcommand{\Carr}{{\mathcal C}}
\theoremstyle{plain}
\newtheorem{thm}{Theorem}[section]
\newtheorem{cor}[thm]{Corollary}
\newtheorem{lem}[thm]{Lemma}
\theoremstyle{definition}
\newtheorem{remark}[thm]{Remark}
\newtheorem{example}[thm]{Example}
\tikzset{join/.code=\tikzset{after node path={%
\ifx\tikzchainprevious\pgfutil@empty\else(\tikzchainprevious)%
edge[every join]#1(\tikzchaincurrent)\fi}}}
\tikzset{>=stealth',every on chain/.append style={join},
         every join/.style={->}}
\begin{document}

\title{Line arrangements and direct sums of free groups }
\author{Kristopher Williams}
\address{Department of Mathematics, University of Iowa, Iowa City, IA 52212, USA}
\email{kjwillia@math.uiowa.edu}
\urladdr{\href{http://math.uiowa.edu/~kjwillia}
{math.uiowa.edu/\char'176kjwillia}}

 \maketitle
 \begin{abstract}  We show that if the fundamental groups of the complements of two line arrangements in the complex projective plane are isomorphic to the same direct sum of free groups, then the complements of the arrangements  are homotopy equivalent. For any such arrangement $\Arr$, we also construct an arrangement $\Arr'$ such that $\Arr'$ is a complexified-real arrangement, the intersection lattices of the arrangements are isomorphic, and the complements of the arrangements are diffeomorphic.
 \end{abstract}

\section{Introduction}
\label{sec:intro}
%
%

Let $\Arr = \{H_0, \cdots, H_n\}$ be an arrangement of projective lines in $\CP^2$ with complement denoted by $M(\Arr ) = \CP^2 \setminus \cup_{i=0}^n H_i$.    The intersection lattice of the arrangement $L(\Arr)$ is the partially ordered set consisting of non-empty intersection of hyperplanes and is ordered by reverse inclusion (see \cite{OT-Arrs-MR1217488}).  Any information that may be determined from the intersection lattice is called combinatorial.

  One of the major questions in arrangements is to what extent the topology of $M(\Arr)$ is determined by the combinatorics of $\Arr$.    It is well known that the cohomology algebra of $M(\Arr)$ is so determined.   However, Rybnikov  has shown examples of two arrangements with isomorphic intersection lattices, but the fundamental groups of the complements of the arrangements are not isomorphic \cite{Ryb98}.

Examples of the latter type have proven difficult to find, with more results showing how the combinatorics may determine the topology.    One such result in this direction comes from  \cite{Fan-Directproducts-MR1460414}, where Fan introduced a graph  associated to an arrangement.    If the graph is a forest of trees, it is shown that the fundamental group of $M(\Arr)$ is isomorphic to a direct sum of free groups.    The converse was latter shown to hold by Eliyahu, Liberman, Schaps and Teicher \cite{ELST-Char-DSFG-converse-AG}.   More information about these results and the fundamental group of the complement of an arrangement are given in section \ref{sec:arr-DSFG}.

Using the work of Fan, we are able to show that arrangements with complements isomorphic to a direct sum of free groups have {\it nice} combinatorics  (see \cite{Jian-Yau-DiffeomorphicTypes-MR1283226} or section \ref{sec:homeomorphismtype} for definitions).     Using some constructions from matroid theory and properties of nice arrangements, in section \ref{sec:homeomorphismtype} we prove

\begin{thm}
Let $\Arr$ be an arrangement in $\CP^2$ such that the fundamental group of $M(\Arr)$ is isomorphic to a direct sum of free groups.   Then, there exists an arrangement $\Arr'$ with defining polynomial whose linear factors have only real coefficients, and $M(\Arr)$ is diffeomorphic to $M(\Arr')$.
\end{thm}

Let $D$ denote the complement of the variety defined by the algebraic plane curve $y^2-x^3-x^2=0$ in $\CC^2$.   One may show that $\pi_1(D) \cong \langle a,b : [a,b], a=b^{-1} \rangle \cong \ZZ$ and the 2-complex constructed from the presentation has the same homotopy type as $D$.   Therefore, $D$ is homotopy equivalent to $S^1 \vee S^2$.   For any arrangement $\Arr$ in $\CC^2$ consisting of one line $\pi_1(M(\Arr)) \cong \langle a: - \rangle \cong \ZZ$.  Further, $M(\Arr)$ is homotopy equivalent to $S^1$.  Thus there are examples of  complements of curves with isomorphic fundamental groups that are not homotopy equivalent.

 It is stated in \cite{Libgober-homotopyTypeComplementPlaneCurves-MR839126} that the following problem is still open:  construct two algebraic plane curves in $\CC^2$ such that the complements have isomorphic fundamental groups and the same Euler characteristic, but are not homotopy equivalent.  A related problem is to determine sufficient conditions on a family of curves so that the fundamental group determines the homotopy type of the complement.

In \cite{Falk-homotopy-types-MR1193601}, Falk gives examples of two arrangements in $\CC^3$ such that the complements have the same homotopy type, but the intersection lattices are not isomorphic.   As the complements are homotopy equivalent, they have isomorphic fundamental groups; in particular, the fundamental groups are isomorphic to $\ZZ^2 \oplus \FF_p \oplus \FF_q$ where $\FF_n$ is the free group on $n$ generators.    In section \ref{sec:homotopytype} we extend this result to

\begin{thm}\label{thm:homotopyThm}  Let $\Arr_1$ and $\Arr_2$ be arrangements in $\CP^2$ such that $\pi_1(M(\Arr_1)) \cong \pi_1(M(\Arr_2))$ and $\pi_1(M(\Arr_1))$ is isomorphic to a direct sum of free groups.  Then $M(\Arr_1)$ and $ M(\Arr_2)$ are homotopy equivalent.
\end{thm}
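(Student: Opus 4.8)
The plan is to produce a single $2$-complex that depends only on the group $G:=\pi_1(M(\Arr_1))\cong\pi_1(M(\Arr_2))$ and to show that each complement is homotopy equivalent to it; transitivity then yields the theorem. First I would feed the hypothesis into the combinatorial machinery recalled in the introduction. Since $G$ is a direct sum of free groups, the converse to Fan's theorem (Eliyahu--Liberman--Schaps--Teicher) forces the graph of each $\Arr_i$ to be a forest, and hence each $\Arr_i$ is a nice arrangement. To get a concrete hold on the attaching data I would also invoke the first theorem of this paper to replace each $\Arr_i$ by a complexified-real arrangement with diffeomorphic complement, so that the braid monodromy can be read off a real wiring diagram. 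For a nice arrangement this monodromy yields a presentation of $\pi_1(M(\Arr_i))$ in which the meridians of the lines satisfy only commutator relations at the double points together with the pencil relations at the points of higher multiplicity --- precisely the presentation that displays $G$ as a direct sum of free groups.

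The engine of the argument is that the complement of a line arrangement is homotopy equivalent to the finite $2$-complex modelled on such a braid-monodromy presentation; this follows from the minimality of arrangement complements together with Libgober's description of the homotopy type of the complement in terms of the braid monodromy. Thus $M(\Arr_i)\simeq P_i$, where $P_i$ is the presentation $2$-complex of the presentation above. I emphasize that $M(\Arr_i)$ is in general not aspherical: once the direct sum has three or more free summands (as in the deconings of Falk's examples) the cohomological dimension of $G$ is at least three, whereas $M(\Arr_i)$ has the homotopy type of a $2$-complex, so $P_i$ is a genuine two-dimensional object and not a $K(G,1)$. Consequently one cannot finish by uniqueness of Eilenberg--MacLane spaces, and the full homotopy type of a $2$-complex --- not merely its fundamental group --- must be controlled.

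Next I would show that the homotopy type of $P_i$ is a function of $G$ alone. On the group side this is clean: the center of a direct sum of free groups isolates the infinite-cyclic summands, and the quotient by the center decomposes uniquely into nonabelian free factors, so $G$ determines the multiset of ranks of its free summands. On the geometric side the point is that, for a nice arrangement, the presentation above --- and therefore the cells of $P_i$ and their attaching maps --- is governed only by the multiplicities of the multiple points and by the forest, which are exactly the data encoded in that multiset. I would make this effective by fixing a model nice arrangement realizing the data of $G$ (a generic union of pencils of the prescribed sizes, whose complement is accessible through the Oka--Sakamoto product description) and proving that each $P_i$ is homotopy equivalent to the presentation complex of the model.

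The crux, and the step I expect to be hardest, is this last comparison, because it must straddle the Falk phenomenon: two nice arrangements with isomorphic $G$ may have non-isomorphic intersection lattices, so the homotopy equivalence cannot be imported from a lattice isomorphism, and in the non-aspherical range it must match the $2$-cells and their attaching maps and not just the groups. The delicate verification is that the ways in which the lattices may differ --- how the double points are distributed among the lines and how the multiple points are joined within the forest --- alter the braid-monodromy presentation only through Tietze transformations that are realized by honest homotopy equivalences of the presentation $2$-complexes, rather than by mere cell-stabilizations. Establishing that these combinatorial ambiguities are homotopy-inessential is the heart of the matter, and it is the natural generalization of Falk's explicit computation from his two examples to the whole class of arrangements whose fundamental group is a direct sum of free groups.
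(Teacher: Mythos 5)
Your overall strategy is the same as the paper's: both arguments reduce each $M(\Arr_i)$ to a single model $2$-complex determined by the group alone (the paper's model is the presentation complex of an affine nodal arrangement of type $(m_1,\dots,m_r)$, where the $m_i$ are recovered from $G$ exactly as you describe via the center and the free factors of the quotient), using the fact that the Randell--Arvola presentation complex is homotopy equivalent to the complement and that only Tietze moves of the homotopy-preserving kinds are needed. But there is a genuine gap: the step you yourself flag as ``the crux'' and ``the heart of the matter'' --- showing that the passage from the actual braid-monodromy presentation of a nice arrangement to the model presentation uses only moves of types (i)--(iii) and never requires inserting a redundant relator (which would wedge on an $S^2$ and change the homotopy type) --- is precisely the entire content of the paper's Theorem~\ref{thm:affine-dsfg}, and you assert that it should hold rather than prove it. Since the complements are not aspherical, nothing short of this explicit verification closes the argument; as written, your proposal establishes only that the two complements have presentation complexes with isomorphic fundamental groups, which is not enough.

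For comparison, the paper carries out this step by induction on the number of vertices of Fan's graph. Using the real representation of Remark~\ref{clm:combed-graph}, one arranges that some multiple point $m$ is an endpoint or isolated vertex of the forest and that the lines $H_2,\dots,H_j$ through $m$ meet the rest of the arrangement only in double points located ``beyond'' all other intersections; the Randell--Arvola presentation then splits as commutators $[h_i,b_l]$, one pencil relation $[h_1,\dots,h_j]$, and the relators of the subarrangement $\mathcal{B}\cup\{H_1\}$. A type (ii) inverse move introduces $b_0=h_1h_j\cdots h_2$, a type (ii) move eliminates $h_1$, and moves of types (i) and (iii) convert the pencil relation into commutators $[b_0,h_i]$ and strip the $h_i$ from the remaining relators, reducing to the inductive hypothesis. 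Without an argument of this kind (or an equivalent control on the attaching maps), your proof is incomplete exactly where completeness matters.
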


{\bf Acknowledgements} The author wishes to thank his advisor Richard Randell for many useful conversations.

\section{Arrangements and Direct Sums of Free Groups}
\label{sec:arr-DSFG}
%
%
\subsection{Fundamental Group of Arrangement Complements}
\label{subsec:fundgrp}

Let $\Arr $ be an arrangement of lines in $\CP^2$, and denote the complement of the arrangement by $M(\Arr) := \CP^2 \setminus \cup_{H \in \Arr} H$.    By choosing a line $H_0 \in \Arr $ to be the ``line at infinity,'' we will consider the arrangement $\Arr \setminus \{H_0\}$ as an arrangement in $\CC^2$.   As  $M(\Arr) := \CP^2 \setminus \cup_{H \in \Arr} H \cong \CC^2 \setminus \cup_{H \in \Arr, H \neq H_0} H =: M(\Arr \setminus H_0)$, we will study the complement of the projective arrangement by examining the complement of an affine arrangement.   

All presentations in this paper will be Randell-Arvola presentations.   A quick introduction to these presentations in the case of complexified-real arrangements may be found in \cite{Falk-homotopy-types-MR1193601}.    We work with these presentations in order to make use of the the following theorem.

\begin{thm}[\cite{CS-BraidMonodromy-MR1470093}] \label{thm:complex-homequiv} For any arrangement $\Arr$ in $\CC^2$, the standard CW-complex associated to the Randell-Arvola presentation of $\pi_1(M(\Arr))$ is homotopy equivalent to $M(\Arr)$.
\end{thm}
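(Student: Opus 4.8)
The plan is to realize $M(\Arr)$ directly as a $2$-complex by means of a generic linear projection and its braid monodromy, matching the resulting cell structure with the Randell--Arvola presentation; this is the point of view of Libgober. First I would choose affine coordinates $(x,y)$ on $\CC^2$ so that the projection $p\colon\CC^2\to\CC$, $(x,y)\mapsto x$, is \emph{generic} for $\Arr$: no line of $\Arr$ is a fiber of $p$, and the multiple points of $\Arr$ have pairwise distinct images $a_1,\dots,a_s\in\CC$. With this choice the restriction $p\colon M(\Arr)\setminus p^{-1}\{a_1,\dots,a_s\}\to\CC\setminus\{a_1,\dots,a_s\}$ is a locally trivial fiber bundle whose fiber is a generic line of the pencil punctured at its $n$ intersection points with $\Arr$, hence homotopy equivalent to a wedge $\bigvee_n S^1$ with $\pi_1=\FF_n$.

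The heart of the argument is to reassemble $M(\Arr)$ from this bundle while controlling the homotopy type. Since $M(\Arr)$ is a smooth affine surface it is Stein, so by the Andreotti--Frankel theorem it has the homotopy type of a finite CW complex of dimension $\le 2$; the task is to produce such a complex compatibly with $p$. A single generic fiber $F\simeq\bigvee_n S^1$ supplies one $0$-cell and $n$ $1$-cells, namely the meridians that serve as the Randell--Arvola generators. Transporting the fiber around a small loop encircling a critical value $a_i$ induces on $\pi_1(F)=\FF_n$ a braid automorphism $\beta_i$, and the local behavior of the pencil over $a_i$ prescribes attaching one $2$-cell for each relation $g\,\beta_i(g)^{-1}$, with $g$ a generator. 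Gluing these local pieces over a wedge of loops generating $\pi_1(\CC\setminus\{a_1,\dots,a_s\})$ then exhibits $M(\Arr)$ as homotopy equivalent to the $2$-complex carrying exactly these cells.

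It remains to identify this geometric complex with the standard presentation complex. Here I would appeal to the Zariski--van Kampen theorem, which computes $\pi_1(M(\Arr))$ with the $n$ fiber meridians as generators and the braid-monodromy relations $g=\beta_i(g)$ as relators --- precisely the Randell--Arvola presentation. By construction the $1$-skeleton of the assembled complex is the wedge of $n$ circles indexed by these generators and its $2$-cells are attached along the relators, so it is the presentation complex, and the homotopy equivalence with $M(\Arr)$ follows.

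I expect the main obstacle to lie in the reassembly step: verifying that crossing each critical value attaches \emph{only} $2$-cells, that they attach along the relator words up to homotopy and not merely up to a change of $\pi_1$, and that the local models at the different $a_i$ fit together consistently as the generating loops of the base are concatenated. This demands a careful local analysis of $M(\Arr)$ near each singular fiber, together with the compatibility of the braid monodromy with the Hurwitz action recording the choices of paths to the $a_i$. It is exactly the one-complex-dimensionality of the fibers --- special to complements of plane curves --- that forces the construction to stop in dimension $2$ and upgrades the Zariski--van Kampen isomorphism on $\pi_1$ to a genuine homotopy equivalence.
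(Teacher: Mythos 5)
This theorem is quoted in the paper from Cohen--Suciu \cite{CS-BraidMonodromy-MR1470093} without proof, so there is no internal argument to compare against; I can only assess your sketch on its own terms. Your overall strategy --- a generic linear projection, the fibration of $M(\Arr)$ over the punctured line with fiber a wedge of $n$ circles, and the attachment of $2$-cells over each special fiber according to the braid monodromy --- is the right one: it is Libgober's theorem that the resulting $2$-complex, namely the presentation complex of the braid monodromy (Zariski--van Kampen) presentation, is homotopy equivalent to the complement of a plane affine curve. One secondary point: attaching one $2$-cell for \emph{every} generator over \emph{every} critical value gives the wrong Euler characteristic (the relators $\beta_i(g_j)g_j^{-1}$ with $\beta_i(g_j)=g_j$ bound wedge summands homeomorphic to $S^2$); one must attach cells only for the strands actually braided at each node, which is part of the local analysis you defer.

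The genuine gap is in your final step, where you assert that the braid monodromy presentation ``is precisely the Randell--Arvola presentation.'' It is not. The Randell--Arvola presentation is read off from a (braided) wiring diagram, and its relators are families of iterated-commutator-type relations in conjugated meridians, one family per multiple point; the braid monodromy presentation has relators of the form $\beta_i(g)g^{-1}$. The two present isomorphic groups, but an isomorphism of groups says nothing about the homotopy type of the presentation $2$-complexes --- as the paper itself emphasizes, arbitrary Tietze transformations can change the homotopy type by wedging on copies of $S^2$, which is exactly why only moves of types (i)--(iii) are permitted. The actual content of the Cohen--Suciu theorem is to exhibit an explicit sequence of homotopy-type-preserving Tietze transformations (via a comparison of the monodromy data through Markov/Hurwitz moves on the braided wiring diagram) carrying the braid monodromy presentation to the Randell--Arvola one. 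Without that comparison, your argument shows only that $M(\Arr)$ is homotopy equivalent to \emph{some} presentation complex of $\pi_1(M(\Arr))$ on the meridian generators, not to the Randell--Arvola presentation complex, which is the statement actually needed in Section \ref{sec:homotopytype}.
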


Recall that the standard CW-complex associated to a (finite) presentation $P$ has one 0-cell, a 1-cell for each generator (with both ends attached to the 0-cell), and a 2-cell for each relator (with boundary attached by following along the 1-cell associated to each generator with respect to orientation).   Given any two finite presentations of a group, it is possible to transform one presentation into the other via  a sequence of Tietze transformations.   However, one must exercise care when performing these transformations as they can alter the homotopy type of the associated CW-complex.    In\cite{Falk-homotopy-types-MR1193601}, Falk lists the following transformations as not affecting the homotopy type:
\begin{enumerate}
\item[(i)] Replace any relator $r$ by $w^{-1}r^{\pm1}w$ where $w$ is any word in the generators.
\item[(ii)]  Delete a generator $g$ and a relator $gw^{-1}$ where $w$ is a word in the generators that does not contain $g$, and in each relator replace $g$ with $w$.
\item[(iii)]  For any distinct relators $r$ and $s$, replace  $r$ with $rs$.
\end{enumerate}
Also listed is a transformation that changes the homotopy type by wedging the complex with a copy of $S^2$:
\begin{enumerate}
\item[(iv)]  Insert a relator that is a consequence of other relators.
\end{enumerate}
Any two presentations of a group may be attained by a sequence of these transformations and their inverses.

\subsection{Fan's Graph}
\label{subsec:FansGraph}
In \cite{Fan-Directproducts-MR1460414}, Fan defines a graph on an arrangement in $\CP^2$.   We will denote the graph of an arrangement by $F(\Arr)$.   The vertices of $F(\Arr)$ will consist of all points on the arrangement with multiplicity at least three.   For each projective line $H$ in the arrangement, let $\{a_{i,H}\}_{i=1}^{n_H}$ denote the collection of multiple points contained in $H$.   For any $a_{i,H}$ and $a_{i+1,H}$ draw an edge on  $H$ such that the endpoints of the edge are $a_{i,H}$ and $a_{i+1,H}$, no two edges intersect, and the edge does not intersect any other point on the arrangement of multiplicity at least two.    The result of repeating this construction over all lines in the arrangement is the graph $F(\Arr)$.   

 Fan proves that this graph is well-defined up to homotopy type and uses the graph to prove the following theorem.

\begin{thm}[\cite{Fan-Directproducts-MR1460414}, \cite{ELST-Char-DSFG-converse-AG}] \label{thm:Fan-DSFG} Let $\Arr$ be a line arrangement in $\CP^2$.    $F(\Arr)$ is a forest of trees if and only if $\pi_1 (M(\Arr)) $ is isomorphic to a direct sum of free groups.
\end{thm}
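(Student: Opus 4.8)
The plan is to prove the two implications separately, working throughout with the Randell--Arvola presentation of $\pi_1(M(\Arr))$, which by Theorem~\ref{thm:complex-homequiv} faithfully records the homotopy type of $M(\Arr)$. After choosing a line at infinity I pass to the affine complement, so that the generators $x_1,\dots,x_n$ are meridians of the affine lines and every relator is supported at a single multiple point $p$: at a point of multiplicity $m$ lying on lines $\ell_{i_1},\dots,\ell_{i_m}$ the braid monodromy contributes (conjugates of) the relations presenting the local group $\ZZ\oplus\FF_{m-1}$ of $m$ concurrent lines. The whole argument is an analysis of how these local blocks interact, and Fan's graph $F(\Arr)$ records exactly that interaction: an edge of $F(\Arr)$ is present precisely when two multiple points share a line.

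For the forward implication (forest $\Rightarrow$ direct sum of free groups) I would induct on the number of edges of $F(\Arr)$. When there are no edges, no two multiple points lie on a common line, so the conjugating words in the relators can be trivialized one point at a time by the moves (i)--(iii); the presentation then splits as a direct product, one free factor $\FF_{m_p-1}$ per multiple point $p$ (double points simply contributing commuting generators), and this is a direct sum of free groups. For the inductive step I choose a leaf $p$ of a tree in the forest. Degree one means that exactly one line through $p$ carries a further multiple point, while the remaining $m_p-1$ lines meet a relation only at $p$. I would peel these pendant meridians, together with the relation at $p$, off the presentation, identifying the residue with the presentation of the arrangement $\Arr'$ obtained by deleting the pendant lines, whose graph is $F(\Arr)$ with the leaf removed. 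The acyclicity of $F(\Arr)$ is exactly what forces the monodromy conjugations to unlink, so that the peeled factor is a genuine \emph{direct} summand $\FF_{m_p-1}$ rather than an iterated or semidirect contribution; granting this, induction writes $\pi_1(M(\Arr))$ as a direct sum of free groups.

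For the converse (direct sum of free groups $\Rightarrow$ forest) I would argue the contrapositive: assuming $F(\Arr)$ contains a cycle, I show $\pi_1(M(\Arr))$ cannot be isomorphic to any $\bigoplus_i\FF_{d_i}$. The tool is a rigid, combinatorially computable invariant. A direct sum $\bigoplus_i\FF_{d_i}$ has complement a $K(\pi,1)$ homotopy equivalent to $\prod_i\bigl(\bigvee^{d_i}S^1\bigr)$, so its lower central series ranks $\phi_k$ are forced by the identity $\prod_k(1-t^k)^{\phi_k}=\prod_i(1-d_i t)$ with $\sum_i d_i=b_1(M(\Arr))$. Since the $\phi_k$ of an arrangement are themselves determined by the intersection lattice, it suffices to show that a cycle in $F(\Arr)$ pushes some $\phi_k$ (equivalently, some component of the resonance variety) outside the range permitted by this identity. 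Concretely, I would isolate inside the cycle a minimal entangled subarrangement --- a triangle of lines each of whose vertices carries a third line --- whose monodromy yields a non-central relation of the kind absent from any product of free groups, and then transfer this obstruction to the ambient group.

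The main obstacle is the converse. Detecting ``not a direct sum of free groups'' requires an invariant fine enough to register the \emph{global} linking that a cycle records, yet explicit enough to extract from the Randell--Arvola presentation; controlling the lower central series ranks, or the resonance variety, for an arbitrary cyclic configuration rather than for a single model triangle is the delicate point. In the forward direction the difficulty is narrower but real: verifying that the leaf really splits off as a \emph{direct} factor --- equivalently, that acyclicity trivializes the relevant conjugations --- is the heart of Fan's theorem and must be checked with care rather than asserted. These two implications are the contributions of Fan and of Eliyahu--Liberman--Schaps--Teicher respectively, and I would reconstruct each along the lines above.
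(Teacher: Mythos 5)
The paper does not prove this theorem: it is imported verbatim from the literature, with the forward direction attributed to Fan \cite{Fan-Directproducts-MR1460414} and the converse to Eliyahu--Liberman--Schaps--Teicher \cite{ELST-Char-DSFG-converse-AG}. So there is no in-paper argument to measure you against, and the only question is whether your sketch would stand on its own. It would not, and the gaps are exactly at the load-bearing points. In the forward direction, the assertion that ``acyclicity is exactly what forces the monodromy conjugations to unlink,'' so that the leaf contributes a \emph{direct} factor $\FF_{m_p-1}$, is the entire content of Fan's theorem; nothing in your outline explains why the conjugating words appearing in the Randell--Arvola relators at the leaf can be trivialized, nor why the peeled generators commute with (rather than merely normalize) the rest of the group. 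You acknowledge this yourself, but acknowledging a gap does not close it.

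The converse has more serious problems than you flag. First, your pivot --- ``the $\phi_k$ of an arrangement are themselves determined by the intersection lattice'' --- is not a known fact; whether the LCS ranks of an arrangement group are combinatorial is precisely the (open, in general) LCS-formula problem, and only the low-degree ranks ($\phi_2$, $\phi_3$, and resonance data) are known to be lattice-determined. Second, a cycle in $F(\Arr)$ is a cycle in the incidence structure of \emph{multiple points and shared lines}; it need not contain, or reduce to, ``a triangle of lines each of whose vertices carries a third line,'' so the proposed minimal entangled subconfiguration does not cover the general case. Third, the transfer step is suspect: deleting lines induces a surjection $\pi_1(M(\Arr))\to\pi_1(M(\Arr'))$, so the subarrangement group is a quotient, not a subgroup, of the ambient group, and ``not a direct sum of free groups'' does not pass from a quotient back to the group without further argument. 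Any honest reconstruction of the converse has to confront these three points; the published proof in \cite{ELST-Char-DSFG-converse-AG} is a delicate group-theoretic analysis for good reason.
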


The forward direction was proven in \cite{Fan-Directproducts-MR1460414} and the backwards direction was proven in \cite{ELST-Char-DSFG-converse-AG}.

\subsection{Affine Nodal Arrangements}
\label{subsec:ANA}

In \cite{CDP-Analogs-Zariski-MR2153112}, Choudary, Dimca, and Papadima define an {\it affine nodal arrangement} as an arrangement of lines in $\CC^2$ such that the lines intersect only in double points.    Therefore, all higher order multiple points occur on the line at infinity.   If an affine nodal arrangement has $r \geq 2$ multiple points at infinity, then $\Arr$ is {\it split solvable} of type $\textbf{m}=(m_1,m_2,\dots, m_r)$ with multiple points of order $m_1 + 1, m_2 +1, \dots, m_r +1$.   A split solvable arrangement is an arrangement of $r$ sets of parallel lines, each set have $m_j$ lines for $1 \leq j \leq r$.   Any two lines not in the same set of parallel lines intersect in a double point.

If an arrangement has only one multiple point on the line at infinity, then the arrangement consists of set $m_1$ parallel lines.  We will abuse notation and say that the arrangement is affine nodal of type $(m_1)$.    An affine nodal arrangement of type ${\bf m} = (m_1, \dots, m_r)$ is a split solvable arrangement of type ${\bf m}$ if $r\geq 2$.    As the class of affine nodal arrangements has nice combinatorics, the combinatorics determine the diffeomorphism type of the complement (see \cite{Jian-Yau-DiffeomorphicTypes-MR1283226} or subsection \ref{subsec:nice}).  Therefore, we may determine a presentation for any affine nodal arrangement.

\begin{lem}
Any affine nodal arrangement  of type $(m_1, \dots, m_r)$ has Randell-Arvola presentation given by 
\[\langle a_{1,1},\dots,a_{1,m_1}, a_{2,1},\dots, a_{r,m_r} : [a_{i,j},a_{k,l}]    \rangle\]
where the relations are commutators and indexed by $1 \leq i < k \leq r, 1 \leq j \leq m_i, 1 \leq l \leq m_k$.
\end{lem}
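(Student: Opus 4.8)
The plan is to fix a single convenient geometric representative of the combinatorial type $(m_1,\dots,m_r)$ and to read off the Randell--Arvola presentation from its braid monodromy. Since affine nodal arrangements are \emph{nice}, their diffeomorphism type is determined by the combinatorics (see \cite{Jian-Yau-DiffeomorphicTypes-MR1283226} or Subsection~\ref{subsec:nice}), so it suffices to carry out the computation for one realization; by Theorem~\ref{thm:complex-homequiv} the resulting presentation complex is homotopy equivalent to $M(\Arr)$. First I would place the $r$ parallel classes as lines of $r$ distinct real slopes, with the $m_i$ lines of class $i$ mutually parallel, and with all positions chosen generically so that the only finite intersections are the transverse nodes forced by the combinatorics. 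The $n=m_1+\dots+m_r$ affine lines furnish meridional generators, which I label $a_{i,j}$ ($1\le i\le r$, $1\le j\le m_i$) according to the parallel class and the order within the class.

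Next I would choose a generic linear projection $\pi\colon\CC^2\to\CC$ and track the braid monodromy (the braid-monodromy description of Randell--Arvola presentations, see \cite{CS-BraidMonodromy-MR1470093}). Over a generic base point the fiber is $\CC$ minus $n$ points, with free fundamental group on the $n$ meridians. The singular fibers lie exactly over the images of the finite nodes, and each node is the transverse crossing of a line of some class $i$ with a line of a different class $k$. Two lines in the same class are parallel, hence never cross in $\CC^2$: they contribute no braiding and so impose no relation between $a_{i,j}$ and $a_{i,j'}$, which is precisely what produces a free factor $\FF_{m_i}$ on the generators of each class. At each node the local monodromy is a single full twist interchanging the two relevant strands, and the associated Randell--Arvola relator is the commutator of the two crossing meridians, conjugated by a word recording the strands lying between them at that instant.

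The hard part is controlling these conjugating words so that every relator reduces to a bare commutator $[a_{i,j},a_{k,l}]$. I would handle this by arranging the geometry so that the crossings occur in a separated, monotone order along the base --- for instance by spreading the $r$ classes into well-separated blocks, so that the strands of classes $i$ and $k$ braid with one another only while no third class interferes --- and then checking that the conjugating words act trivially on the commutators they decorate. Any residual conjugations can be cleared using the homotopy-type--preserving transformations (i)--(iii) recalled above (replacing a relator by a conjugate, and combining relators), which do not change the homotopy type of the presentation complex. This leaves exactly one relator $[a_{i,j},a_{k,l}]$ for each pair of lines from distinct classes, i.e. the relations indexed by $1\le i<k\le r$, $1\le j\le m_i$, $1\le l\le m_k$, and no others.

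As a consistency check, the presentation I expect to obtain is that of the direct product $\FF_{m_1}\times\cdots\times\FF_{m_r}$: within each class the generators are free, and generators of different classes all commute. This agrees with what the Oka--Sakamoto transversality theorem predicts for the union of the $r$ sets of parallel lines (which pairwise meet transversally in $m_i m_k$ nodes), and with Theorem~\ref{thm:Fan-DSFG}, since for a split solvable arrangement the graph $F(\Arr)$ is the path through the $r$ multiple points at infinity and hence a forest. I anticipate that the only genuine difficulty is the bookkeeping of the braid-monodromy conjugations; once the geometry is chosen to make the crossings non-interfering, the relators fall out as the claimed commutators.
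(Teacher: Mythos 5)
Your proposal is correct and follows essentially the route the paper intends: the paper gives no explicit proof of this lemma, relying only on the preceding remark that affine nodal arrangements have nice combinatorics (so one may compute the Randell--Arvola presentation on a single convenient complexified-real representative), and your braid-monodromy computation with the conjugations cleared by moves (i) and (iii) simply fills in the details the paper omits. The one step you flag as "the hard part" --- reducing the conjugated commutators $[a_{i,j},a_{k,l}^{w}]$ to bare commutators using the other relations --- is exactly the manipulation the paper itself performs in its examples, so there is no gap.
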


Given any finite direct sum of free groups $G$ we may find an arrangement such that the fundamental group of the complement is isomorphic to $G$.

\begin{lem} Let $G = \bigoplus_{i=1}^n \FF_{m_i}$ for $m_i$ a positive integer for all $1 \leq i \leq n$, and let $\Arr$ be an affine nodal arrangement of type ${\bf m} = (m_1, \cdots, m_n)$.   Then $\pi_1(M(\Arr)) \cong G$.
\end{lem}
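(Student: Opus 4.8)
The plan is to combine the explicit presentation of $\pi_1(M(\Arr))$ furnished by the preceding lemma with the standard presentation of a finite direct sum of free groups, and then to verify that these two presentations define the same group. First I would invoke the preceding lemma to record that
\[
\pi_1(M(\Arr)) \cong P := \langle\, a_{i,j}\ (1\le i\le n,\ 1\le j\le m_i) \ :\ [a_{i,j},a_{k,l}]\ (1\le i<k\le n) \,\rangle .
\]
The decisive structural observation is that the only relators are commutators between generators carrying \emph{different} first index $i$, while the generators sharing a fixed first index $i$ satisfy no relation among themselves. Heuristically, then, for each fixed $i$ the generators $a_{i,1},\dots,a_{i,m_i}$ freely generate a copy of $\FF_{m_i}$ inside $P$, and distinct such copies commute elementwise --- which is exactly the structure of $G=\bigoplus_{i=1}^n \FF_{m_i}$.

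To make this precise I would exhibit mutually inverse homomorphisms between $P$ and $G$, both defined on generators. In one direction, define $\phi : P \to G$ by sending $a_{i,j}$ to the element of $G$ whose $i$-th coordinate is the $j$-th free generator of $\FF_{m_i}$ and whose remaining coordinates are trivial. Since elements supported in different coordinates of a direct sum commute, every relator $[a_{i,j},a_{k,l}]$ with $i<k$ maps to the identity, so by von Dyck's theorem $\phi$ is a well-defined homomorphism. In the other direction, for each $i$ let $\psi_i : \FF_{m_i} \to P$ be the homomorphism sending the $j$-th free generator to $a_{i,j}$; this is automatically well-defined because $\FF_{m_i}$ is free. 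I would then assemble the $\psi_i$ into a single homomorphism $\psi : G \to P$ by the rule $\psi(g_1,\dots,g_n)=\psi_1(g_1)\cdots\psi_n(g_n)$, and finish by checking that $\phi$ and $\psi$ are mutually inverse, which reduces to a one-line computation on generators since each sends generators to generators.

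The one step requiring genuine care --- and the main, if modest, obstacle --- is the assembly of $\psi$, because a homomorphism out of a direct sum is \emph{not} in general determined by its restrictions to the factors. Here the construction succeeds precisely because the images $\psi_i(\FF_{m_i})$ commute pairwise in $P$: for $i\ne k$ the generators $a_{i,j}$ and $a_{k,l}$ commute by the defining relators, and commutation of generators propagates to commutation of arbitrary words, whence $\psi_i(g_i)$ commutes with $\psi_k(h_k)$ whenever $i\ne k$. This is exactly the identity needed to verify that the proposed $\psi$ respects multiplication. I would record this commuting-images fact explicitly (or, equivalently, cite the standard presentation of $\prod_i \FF_{m_i}$ as the free generating sets subject only to the cross-factor commutators). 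Once $\psi$ is shown to be a homomorphism, the verifications $\phi\circ\psi=\mathrm{id}_G$ and $\psi\circ\phi=\mathrm{id}_P$ are immediate on generators, yielding $\pi_1(M(\Arr))\cong G$.
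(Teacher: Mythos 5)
Your proposal is correct and takes the route the paper implicitly intends: the paper states this lemma without proof, evidently because it follows at once from the preceding lemma's presentation together with the standard fact that the presentation with generators $a_{i,j}$ and only the cross-factor commutators $[a_{i,j},a_{k,l}]$ ($i\neq k$) presents $\bigoplus_{i=1}^n \FF_{m_i}$. Your write-up simply supplies the details the paper omits, and you correctly identify and resolve the one genuine subtlety, namely that assembling $\psi$ from the $\psi_i$ requires the pairwise commutation of their images, which is exactly what the relators provide.
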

\section{Diffeomorphism Type}
\label{sec:homeomorphismtype}
%
%
The goal of this section is to prove 

\begin{thm}
Let $\Arr$ be an arrangement in $\CP^2$ such that the fundamental group of $M(\Arr)$ is isomorphic to a direct sum of free groups.   Then, there exists an arrangement $\Arr'$ with defining polynomial whose linear factors have only real coefficients, and $M(\Arr)$ is diffeomorphic to $M(\Arr')$.
\end{thm}

We begin by reviewing some necessary theorems and constructions from arrangements and matroids.  The constructions are explained in terms of the intersection lattice associated to the arrangement.   For more information about matroid theory see \cite{White-Matroids-MR849389}.

\subsection{Nice Arrangements}
\label{subsec:nice}

In  \cite{Jian-Yau-DiffeomorphicTypes-MR1283226}, Jiang and Yau define the class of {\it nice} arrangements in $\CP^2$ and prove the following theorem:
\begin{thm} \cite{Jian-Yau-DiffeomorphicTypes-MR1283226}
\label{thm:Nice-Homeo}
 If $\Arr $ is a nice arrangement and $\Arr '$ is another arrangement such that $L(\Arr) \cong L(\Arr' )$, then $M(\Arr )$ is diffeomorphic to $M(\Arr')$. 
\end{thm}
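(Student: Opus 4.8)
The statement is Jiang and Yau's theorem, so the proposal is for how one proves that the intersection lattice of a \emph{nice} arrangement rigidly determines the diffeomorphism type of its complement. The plan is to reduce the claim to a connectivity statement about the space of realizations of the lattice and then invoke Randell's lattice-isotopy theorem. Recall that Randell's theorem asserts that if $\{\Arr_t\}_{t\in[0,1]}$ is a smooth one-parameter family of arrangements in $\CP^2$ in which the defining equations of the lines vary continuously in $t$ and the intersection lattice $L(\Arr_t)$ is independent of $t$, then $M(\Arr_0)$ and $M(\Arr_1)$ are diffeomorphic (indeed ambiently isotopic). Thus it suffices to produce, for the given $\Arr$ and $\Arr'$ with $L(\Arr)\cong L(\Arr')$, a single lattice-constant path of arrangements joining one to the other.

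First I would fix the common lattice $L := L(\Arr)\cong L(\Arr')$ and consider its realization space: the set of all arrangements in $\CP^2$ whose intersection lattice is isomorphic to $L$, taken up to the action of the connected group $\mathrm{PGL}_3(\CC)$. A path in this space is exactly a lattice-constant family, so the theorem follows once the realization space is shown to be connected. The role of the niceness hypothesis is precisely to guarantee this connectivity; in general the realization space of a line combinatorics can be badly disconnected (this is the source of Rybnikov-type and Mn\"ev-type phenomena), so the entire content is that \emph{nice} is a combinatorial condition strong enough to exclude such pathology.

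The key step is to exploit the nice partition of the multiple points supplied by the niceness hypothesis. I would linearly order the blocks of the partition and build up an arbitrary realization of $L$ one block at a time, interpreting the passage from one stage to the next as choosing the newly appearing lines subject only to the incidences recorded in $L$. The niceness condition is designed so that at each stage the lines and points being added impose independent incidence constraints; consequently the locus of valid choices at that stage is the complement of a union of proper subvarieties in an affine space, which is connected. Stringing these connected stages together, and quotienting by the connected group $\mathrm{PGL}_3(\CC)$, shows that the realization space of $L$ is connected, and moreover that any realization can be moved by a lattice-constant path to a fixed normal form adapted to the partition.

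Finally, applying the previous step to both $\Arr$ and $\Arr'$ yields lattice-constant paths from each to the common normal-form realization; concatenating one with the reverse of the other gives a single lattice-constant family from $\Arr'$ to $\Arr$, and Randell's theorem then delivers the desired diffeomorphism between $M(\Arr)$ and $M(\Arr')$. The main obstacle is the connectivity argument of the third paragraph: one must verify that the nice partition genuinely decomposes the incidence constraints into independent, connected strata at every stage, since it is exactly the failure of such independence — collinearities forced among the added points, or coincidences not recorded by $L$ — that can disconnect a realization space. Equally, one must check that no \emph{accidental} incidence appears as the path is traversed, so that the lattice stays constant rather than degenerating at interior times; controlling the base case of the induction and this non-degeneracy along the path is where the real care is required.
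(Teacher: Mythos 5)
This theorem is imported: the paper states it with a citation to Jiang and Yau \cite{Jian-Yau-DiffeomorphicTypes-MR1283226} and contains no proof of it, so there is no internal argument to compare against. Your outline nevertheless reproduces the strategy of the original source: Jiang--Yau prove that niceness makes the realization space of the lattice connected, moving an arbitrary realization to a normal form by adjusting the pairwise disjoint stars $St(v_1),\dots,St(v_n)$ one at a time (your block-by-block construction, with the absence of reduced circles in $JY'$ playing exactly the role of your ``independent incidence constraints''), and then conclude via Randell's lattice-isotopy theorem that lattice-constant families give diffeomorphic complements. As you yourself flag, the sketch defers the real content --- verifying stage-by-stage that no forced or accidental incidences disconnect the space or degenerate the lattice along the path, which is the bulk of Jiang--Yau's paper --- so take your proposal as a correct roadmap of the known proof rather than a complete argument.
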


In order to define nice arrangements, Jiang and Yau construct a graph associated to the arrangement.  We recall their definition and terminology here for convenience.  We will denote this graph by $JY(\Arr)$ or simply by $JY$ if the arrangement is understood.    Let $VJY$ be the set of vertices of the graph and consist of all points of the arrangement with multiplicity at least three.    Let $EJY$ denote the set of edges of $JY$.   Any two vertices $v$ and $w$ that span a line in the arrangement will be associated to a unique edge denoted by $(v,w)$.

A {\it reduced path} of $JY$ is an $n$-tuple $(v_1, \dots, v_n)$ of vertices such that $(v_i,v_{i+1})$ is an edge in $EJY$ and $v_i, v_{i+1},v_{i+2}$ are not on the same line for $i=1, \dots, n-2$.   A {\it reduced circle} is a reduced path such that $v_1=v_n$ and the tuple is a reduced path upon any re-indexing.   

For any vertex $v_0 \in VJY$, the {\it star of $v_0$} is a subgraph of $JY$ denoted by $St(v_0)$ and consists of vertices $VSt(v_0) = \{v_0\} \cup \{ v \in VJY: (v,v_0) \in EJY \}$ and edges $ESt(v_0) = \{(v,w) \in EJY : v=v_0 \text{ or } w=v_0,$ or $ v,w $and $ v_0 $ span the same line in $ \Arr \}$.

An arrangement is called {\it nice} if there exists $v_1, \dots, v_n \in VJY$ such that $St(v_1)$, $\dots$, $St(v_n)$ are pairwise disjoint in $JY$ and $JY' = JY - \bigcup_{i=1}^n (ESt(v_i) \cup \{v_i\})$ contains no reduced circles.

Let $\Arr$ be an arrangement in $\CP^2$ such that the fundamental group of the complement is a direct sum of free groups.   By Theorem \ref{thm:Fan-DSFG}, we have that $F(\Arr)$ is a forest of trees.    One may see that $F(\Arr)$ is a subgraph of $JY(\Arr)$ and that adding the edges to form $JY(\Arr)$ will not introduce any reduced circles to the graph.   Therefore we have shown:

\begin{thm}
\label{thm:DSFG-nice} If $\Arr$ is an arrangement of lines in $\CP^2$ such that $\pi_1(M(\Arr))$ is isomorphic to a direct sum of free groups, then $\Arr$ is a nice arrangement.
\end{thm}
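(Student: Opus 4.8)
The plan is to deduce niceness directly from Fan's theorem together with the observation that the Jiang--Yau graph is obtained from Fan's graph by adding only edges that run along lines already present. First I would invoke Theorem~\ref{thm:Fan-DSFG}: since $\pi_1(M(\Arr))$ is a direct sum of free groups, $F(\Arr)$ is a forest. Next I would record the relation between the two graphs. Both $F(\Arr)$ and $JY(\Arr)$ have as vertices exactly the points of multiplicity at least three, and every Fan edge joins two (consecutive) multiple points on a common line, hence is a Jiang--Yau edge; so $F(\Arr)$ is a subgraph of $JY(\Arr)$. The extra edges of $JY(\Arr)$ join pairs $v,w$ of multiple points lying on a common line $\ell$ but not adjacent along $\ell$; for any such pair there is a path in $F(\Arr)$ from $v$ to $w$ running along $\ell$ through the intervening multiple points. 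My goal is to show $JY(\Arr)$ contains no reduced circle, from which niceness follows immediately (take the empty family of stars, or a single star, since a reduced circle cannot appear in a subgraph of a graph that has none).

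The heart of the argument is a replacement procedure. Suppose, for contradiction, that $JY(\Arr)$ has a reduced circle $(v_1,\dots,v_k=v_1)$. For each edge $(v_i,v_{i+1})$, let $\ell_i$ be the line of $\Arr$ it spans, and replace that edge by the $F(\Arr)$-path along $\ell_i$ from $v_i$ to $v_{i+1}$ (a single edge when $v_i,v_{i+1}$ are adjacent along $\ell_i$). Concatenating these paths yields a closed walk $W$ in $F(\Arr)$ of positive length. I would then verify that $W$ is reduced, i.e. has no backtracking: within each inserted path the edges form a simple path along one line and cannot backtrack, while at a junction vertex $v_{i+1}$ the incoming edge lies on $\ell_i$ and the outgoing edge on $\ell_{i+1}$. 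Here the defining property of a reduced circle enters: because $v_i,v_{i+1},v_{i+2}$ are not collinear we have $\ell_i\neq\ell_{i+1}$, so the two edges meeting at $v_{i+1}$ lie on different lines and are distinct. Thus no backtracking occurs anywhere and $W$ is a reduced closed walk of positive length.

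To finish, I would use the standard fact that a forest admits no reduced closed walk of positive length: tracking $W$ to its first repeated vertex produces a genuine cycle (the only alternatives being a self-loop or a length-two backtrack, both excluded), contradicting that $F(\Arr)$ is a forest. Hence $JY(\Arr)$ has no reduced circle and $\Arr$ is nice. The main obstacle I anticipate is the backtracking verification at the junctions: one must translate the purely combinatorial ``not on the same line'' condition defining reduced circles into the statement that consecutive inserted edges lie on distinct lines of $\Arr$, and check the degenerate possibilities (such as $v_i=v_{i+2}$, which the non-collinearity condition already forbids) so that the concatenated walk is genuinely reduced.
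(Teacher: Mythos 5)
Your argument is correct and follows essentially the same route as the paper: invoke Fan's theorem to conclude $F(\Arr)$ is a forest, observe $F(\Arr)\subseteq JY(\Arr)$, and show the additional Jiang--Yau edges cannot create a reduced circle, so $\Arr$ is nice. The paper leaves the last step as ``one may see,'' whereas you supply the missing verification (replacing each $JY$-edge by the $F$-path along its line and checking no backtracking at junctions via the non-collinearity condition), which is exactly the intended justification.
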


\begin{remark} By Theorem \ref{thm:DSFG-Homeo} we may conclude that any  two lattice isomorphic arrangements with fundamental groups isomorphic to a direct sum of free groups will have diffeomorphic complements.
\end{remark}

\subsection{Truncation} In the following sections we use terminology and constructions involving the intersection lattice of the arrangement.   For more information on these from a matroid theoretic point of view see \cite{White-Matroids-MR849389}.

Let $\Arr$ be a central, essential arrangement in $\CC^l$.     The {\it truncation} of the intersection lattice is an operation that removes all elements of the lattice of rank $l-1$ and lowers the rank of the top element to $l-1$.    Geometrically, the {\it truncation of an arrangement} is the arrangement formed by intersecting an arrangement $\Arr$ in $\CC^l$ with a generic hyperplane through the origin.   A generic hyperplane is one that preserves the intersection lattice from rank 0 to $l-2$.   The resulting arrangement is still central, and has intersection lattice isomorphic to the truncation of $L(\Arr)$.

The truncation of the lattice will be denoted by $T(L(\Arr))$.   If the repeated truncation of the matroid of an arrangement $\Arr $ yields a arrangement in $\CC^3$ such that the corresponding arrangement in $\CP^2$ has nice combinatorics, we may denote the truncation of the arrangement as $T_3(\Arr )$, and note that the arrangement is well-defined up to diffeomorphism type.   Note that if $\Arr$ is a complexified real arrangement, then $T_3(\Arr)$ is also a complexified real arrangement as we may choose the generic hyperplanes to preserve the real structure of the arrangement.

\subsection{Parallel Connection}

We briefly describe the construction of parallel connection.  For more information see \cite{Falk-Combi-and-OS-algs-MR1845492},  \cite{White-Matroids-MR849389}, \cite{EF-OS-Algebras-MR1719136}. 
      
A {\it base-pointed lattice} is a pair $(L(\Arr),H)$ where $H$ is a hyperplane in $\Arr$.   It will be useful in the next construction to think of the flats of the intersection lattice as the set of hyperplanes containing the subspace rather than as the intersection of the hyperplanes. 
 
 The {\it parallel connection} between base-pointed lattices $(L(\Arr_1), H_1)$ and $(L(\Arr_2), H_2)$ will be a base-pointed lattice 
 \begin{equation*}
(P,H^{'}):= (P((L(\Arr_1), H_1), (L(\Arr_2), H_2)), H')
 \end{equation*} with rank one elements $\{H : H \in \Arr_1 \setminus H_1, \text{ or } H \in \Arr_2 \setminus H_2\} \cup \{H'\}$.  Using the identification $H'=H_1=H_2$, we may take the flats of the lattice $P$ to be 
\begin{equation*}
\{ K : K \cap \Arr_1 \in L(\Arr_1), \text{ and } K \cap \Arr_2 \in L(\Arr_2) \}
\end{equation*}
The rank of a flat is given by $r_P (K) = r_1(K \cap \Arr_1) + r_2(K \cap \Arr_2) - r_1(K \cap \{H'\})$ where $r_i$ is the rank function associated to $L(\Arr_i)$.  

Given two arrangements, one may find an arrangement realizing the parallel connection as follows (\cite{EF-OS-Algebras-MR1719136}, \cite{White-Matroids-MR849389}).  Let $\arA_1$ and $\arA_2$ be central arrangements with defining polynomials $Q_1$ and $Q_2$, respectively.   By a change of coordinates, the hyperplane associated to the base-point is given by a coordinate hyperplane in each polynomial, i.e.

\begin{equation*}
Q_1(x) =x_1 \widehat{Q}_1(x_1,\cdots,x_n)   ~~~~ Q_2(y) = y_1 \widehat{Q}_2(y_1,\cdots,y_m)
\end{equation*}

In the parallel connection, the hyperplanes $y_1=0$ and $x_1=0$ will be identified.   Define a polynomial in coordinates $(x_1,\cdots,x_n,y_2,\cdots,y_m)$ by 
\begin{equation*}
Q= Q_1(x_1,\cdots,x_n)\widehat{Q}_2(x_1,y_2,\cdots,y_m).
\end{equation*}
$Q$ is a defining polynomial for an arrangement realizing the intersection lattice $P$ in $\CC^{n+m-1}$.  Denote the arrangement resulting from parallel connection by $P((\arA_1, H_1),(\arA_2, H_2))$.   

Different choices of base-point in the parallel connection may yield non-isomorphic lattices.   However, as a corollary to Corollary 4.3 in \cite{Falk-Proud-ParConn-BundlesOf-Arrs-MR1877716}, one may show

\begin{thm}
\label{thm:base-pointDiffeo}  Let $\arr_1$ and $\arr_2$ be central arrangements of hyperplanes.  Let $(L(\Arr_1),H_1)$ and $(L(\Arr_1),H_1')$ be base-pointed lattices with different base-points associated to the arrangement $\arA_1$, and let $(L(\Arr_2),H_2)$ be a base-pointed lattice associated to the arrangement $\arA_2$.   If $\Arr$ and $\Arr'$ are realizations of the lattices $P((L(\Arr_1), H_1), (L(\Arr_2), H_2))$ and $P((L(\Arr_1), H_1'), (L(\Arr_2), H_2))$ respectively, then $M(\Arr)$ and $M(\Arr')$ are diffeomorphic.
\end{thm}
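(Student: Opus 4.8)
The plan is to realize both complements as fibered products over $\CC^*$ and then to write down an explicit biholomorphism intertwining the two. Place $\Arr_1$ and $\Arr_2$ in the coordinates used in the excerpt, so that $Q_1(x)=x_1\widehat{Q}_1(x_1,\dots,x_n)$ and $Q_2(y)=y_1\widehat{Q}_2(y_1,\dots,y_m)$, and $\Arr$ has defining polynomial $Q=Q_1(x_1,\dots,x_n)\widehat{Q}_2(x_1,y_2,\dots,y_m)$. Thus $M(\Arr)$ is the locus where $Q_1(x)\neq 0$ and $\widehat{Q}_2(x_1,y_2,\dots,y_m)\neq 0$; since $x_1\mid Q_1$, the first condition forces $x_1\neq 0$, and the assignment $(x,y)\mapsto\bigl(x,(x_1,y_2,\dots,y_m)\bigr)$ identifies $M(\Arr)$ biholomorphically with
\[
M(\Arr_1)\times_{\CC^*}M(\Arr_2)=\{(u,v)\in M(\Arr_1)\times M(\Arr_2): \ell_1(u)=\ell_2(v)\},
\]
where $\ell_1,\ell_2$ are the linear forms defining the base-points $H_1,H_2$. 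This is exactly the bundle structure furnished by Corollary 4.3 of \cite{Falk-Proud-ParConn-BundlesOf-Arrs-MR1877716}. The identical discussion applied to $\Arr'$ realizes $M(\Arr')$ as the same fibered product but with $\ell_1$ replaced by the form $\ell_1'$ defining $H_1'$.

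First I would use that $\Arr_1$ is \emph{central}: its complement $M(\Arr_1)$ is invariant under the scaling action of $\CC^*$ on $\CC^n$, and each defining form is homogeneous of degree one, so $\ell_1(\lambda u)=\lambda\,\ell_1(u)$ and likewise for $\ell_1'$. Since $H_1,H_1'\in\Arr_1$, both $\ell_1$ and $\ell_1'$ are nowhere zero on $M(\Arr_1)$. Working in the fibered-product coordinates above, I would then define
\[
\Phi\colon M(\Arr)\longrightarrow M(\Arr'),\qquad \Phi(u,v)=\bigl(\lambda(u)\,u,\;v\bigr),\qquad \lambda(u)=\frac{\ell_1(u)}{\ell_1'(u)}.
\]
The scalar $\lambda(u)$ lies in $\CC^*$ and is holomorphic on $M(\Arr_1)$; by centrality $\lambda(u)u\in M(\Arr_1)$, and $\ell_1'(\lambda(u)u)=\lambda(u)\ell_1'(u)=\ell_1(u)=\ell_2(v)$, so $\Phi(u,v)$ satisfies the defining relation of $M(\Arr')$. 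A direct check shows that $(u',v)\mapsto\bigl(\tfrac{\ell_1'(u')}{\ell_1(u')}\,u',\,v\bigr)$ is a two-sided holomorphic inverse, so $\Phi$ is a biholomorphism and in particular a diffeomorphism. Composing with the identifications of the first paragraph yields $M(\Arr)\cong M(\Arr')$.

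The only substantive step is the first one: confirming that the explicit realization of the parallel connection genuinely produces the fibered product over $\CC^*$ with the two base-point forms as structure maps, which is where Corollary 4.3 of \cite{Falk-Proud-ParConn-BundlesOf-Arrs-MR1877716} enters. After that, the hypotheses are consumed transparently by the rescaling $\Phi$: centrality of $\Arr_1$ supplies both the $\CC^*$-invariance of $M(\Arr_1)$ and the degree-one homogeneity needed for $\lambda(u)u$ to land back in $M(\Arr_1)$ with the corrected value of $\ell_1'$, while $H_1,H_1'\in\Arr_1$ forces $\ell_1,\ell_1'$ to be units on $M(\Arr_1)$ so that $\lambda$ is defined and invertible. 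Note that $\Arr_2$ and $H_2$ are passive, as $\Phi$ leaves the $M(\Arr_2)$ coordinate fixed; this is why only the first base-point is changed in the statement. I expect the main obstacle to be bookkeeping rather than conceptual: matching the coordinate normalization used to write $Q$ with the intrinsic forms $\ell_1,\ell_1',\ell_2$, and confirming that the fibered-product identification respects the smooth structure, so that the conclusion is diffeomorphism and not merely homotopy equivalence.
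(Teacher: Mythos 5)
Your proposal is correct and follows essentially the same route as the paper, which simply invokes Corollary 4.3 of \cite{Falk-Proud-ParConn-BundlesOf-Arrs-MR1877716} (the identification of the parallel-connection complement with the fibered product $M(\Arr_1)\times_{\CC^*}M(\Arr_2)$ over the base-point forms) and leaves the deduction to the reader; you supply the missing details, namely the explicit rescaling biholomorphism $\Phi(u,v)=\bigl(\tfrac{\ell_1(u)}{\ell_1'(u)}u,\,v\bigr)$, and your verification of it is sound. The only caveat, shared with the theorem as stated in the paper, is that the argument applies to the standard polynomial realizations of the parallel-connection lattices rather than to arbitrary realizations, which is the intended reading.
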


If $\arr = P((\arA_1, H_1),(\arA_2, H_2))$ is an arrangement such that $T^3(\arr)$ is a matroid with nice combinatorics, then we may define the {\it 3-truncated parallel connection} of $\arA_1,\arA_2$ by $TP_3((\arA_1, H_1),(\arA_2, H_2))$, and see that it is defined up to diffeomorphism type of the complement.

The 3-truncated parallel connection has a simple geometric description if the initial arrangements are in $\CP^2$.   In this case one may simply consider the arrangement resulting from identifying the arrangements along the base-point hyperplane and  perturbing the rest of the arrangements into general position with respect to each other, being sure to maintain the respective intersection lattices.   Passing to the associated arrangement in $\CC^3$ realizes the 3-truncated parallel connection.   If the initial arrangements are complexified-real arrangements, this operation may be performed so that the resulting arrangement is complexified real as well.   One simply needs to perform the identification of the arrangements 'far enough away' from the intersection points of the respective arrangements.

\subsection{Direct Sums} 
Let $\Arr$ and $\Arr'$ be arrangements in $\CC^n$ and $\CC^m$ respectively.   The sum of the arrangements is defined as
\begin{equation*}
\Arr \oplus \Arr' = \{ H_i \oplus \CC^m, \CC^n \oplus H_j : H_i \in \Arr, H_j \in \Arr' \}
\end{equation*}
and is an arrangement in $\CC^{n+m}$ \cite{OT-Arrs-MR1217488} .  

If $T^3(\Arr \oplus \Arr')$ yields a matroid with nice combinatorics, we will call $D_3(\Arr, \Arr') := T_3(\Arr \oplus \Arr')$ the {\it 3-generic direct sum} of $\Arr $ and $\Arr'$.   The geometric description for two arrangements in $\CP^2$ is to place the arrangements in projective space in general position with respect to each other and consider the associated arrangement in $\CC^3$.   If the arrangements are complexified real arrangements, this operation may be performed so that the resulting arrangement is also complexified real arrangement.

\subsection{Construction}
In this section we will prove the following:    
\begin{thm}
\label{thm:DSFG-Homeo}
 Let $\Arr$ be an arrangement in $\CP^2$ such that $\pi_1(M(\Arr))$ is isomorphic to a direct sum of free groups. Then there exists an arrangement $\mathcal{B}$ that may be constructed by applying 3-truncated parallel connection and 3-generic direct sum to sequence of central, complexified-real arrangements in $\CC^2$, such that $M(\mathcal{B})$ is diffeomorphic to $M(\Arr)$ as arrangements in $\CP^2$.
\end{thm}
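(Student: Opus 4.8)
The plan is to reduce the diffeomorphism statement to a purely combinatorial (lattice) one and then to reconstruct $L(\Arr)$ from elementary pieces. Since $\pi_1(M(\Arr))$ is a direct sum of free groups, Theorem~\ref{thm:Fan-DSFG} shows that $F(\Arr)$ is a forest of trees, and Theorem~\ref{thm:DSFG-nice} shows that $\Arr$ is nice. Hence, by Theorem~\ref{thm:Nice-Homeo}, it suffices to exhibit an arrangement $\mathcal{B}$, assembled from central complexified-real arrangements in $\CC^2$ by the operations $TP_3$ and $D_3$, with $L(\mathcal{B}) \cong L(\Arr)$; niceness of $\Arr$ then forces $M(\mathcal{B})$ to be diffeomorphic to $M(\Arr)$.

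The building blocks will be the pencils: for a point of multiplicity $k$ I take the central arrangement $U_{2,k}$ of $k$ concurrent real lines in $\CC^2$, and for a line meeting the rest of $\Arr$ only in double points I take a single line; all of these are complexified-real. The key dictionary is that the two operations realize exactly the two ways two sub-configurations can interact in a forest. Carrying out $TP_3((\Arr_1,H_1),(\Arr_2,H_2))$ identifies a line of each block and perturbs the remainder into general position, so it places the distinguished points of $\Arr_1$ and $\Arr_2$ on a common line; this is precisely an edge of $F(\Arr)$. By contrast $D_3(\Arr_1,\Arr_2)$ places the two blocks in general position, introducing no common line and no shared multiple point, which is precisely a disjoint union in $F(\Arr)$.

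I would then argue by induction on $F(\Arr)$. Process the forest one tree at a time; within a tree, root it and attach the pencil of each new vertex to the already-built part by a single $TP_3$ along the unique line joining that vertex to its parent. A chain of collinear multiple points is handled by iterating $TP_3$ along one common base line, and a vertex of high degree is handled by several applications of $TP_3$ at distinct base-point lines of its pencil; here Theorem~\ref{thm:base-pointDiffeo} guarantees that the choice and order of base points are immaterial up to diffeomorphism. Separate trees, together with the lines through at most one multiple point, are then combined by $D_3$. At every stage the partial arrangement has a subforest as its Fan graph, so it remains nice and the operations stay well-defined up to diffeomorphism type, and complexified-realness is preserved throughout because each block is complexified-real and both operations can be performed respecting the real structure.

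The technical heart, and the step I expect to be the main obstacle, is verifying that the arrangement so constructed has $L(\mathcal{B}) \cong L(\Arr)$: one must check that the iterated parallel connections and direct sums reproduce the rank function and the flats of $L(\Arr)$ exactly, creating no spurious incidences and omitting none. This is exactly where the forest hypothesis is indispensable. Acyclicity of $F(\Arr)$ means there are no forced ``closed'' collinearities, so every line not serving as a base point may be perturbed into general position while preserving the lattice; a cycle, on the other hand, would encode an incidence relation that cannot be expressed as a tree of parallel connections of pencils. Making this correspondence between the forest structure and the matroid decomposition precise, and confirming that the truncation $T_3$ of each intermediate central arrangement has the required nice combinatorics, is the crux of the argument.
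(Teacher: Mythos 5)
Your proposal follows essentially the same route as the paper: reduce to a lattice isomorphism via niceness (Theorems \ref{thm:DSFG-nice} and \ref{thm:Nice-Homeo}), realize each multiple point by a central pencil in $\CC^2$, assemble each tree of $F(\Arr)$ by iterated $TP_3$ along the lines carrying the edges (a leaf-removal induction in the paper, a rooted-tree attachment in yours), and combine the trees and the lines meeting $\Arr$ only in double points by $D_3$. The one substantive step you defer --- the flat-by-flat and rank-function check that attaching a pencil by parallel connection to the inductively built arrangement reproduces $L(\Arr)$ exactly --- is precisely the computation the paper carries out in Lemma \ref{lem:tree-diffeo}, so the outline is sound but that verification is what turns it into a proof.
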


We divide the proof into a series of lemmas.  For each of the lemmas, we let $F$ denote Fan's graph of the arrangement.   We also let $\Arr$ denote both the arrangement in $\CP^2$ and $\CC^3$.  By Theorem \ref{thm:DSFG-nice} $\Arr$ has nice combinatorics, so we need only show that $\mathcal{B}$ and $\Arr$ are lattice isomorphic. 

\begin{lem}
If $F$ is an empty graph, then the arrangement $\mathcal{B}$ is constructed via a series of 3-generic direct sums.
\end{lem}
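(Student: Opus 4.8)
The plan is to exploit the fact that the hypothesis on $F$ pins down the combinatorial type of $\Arr$ completely, and then to realize that type by repeatedly applying the 3-generic direct sum to the simplest possible building blocks. First I would unwind what it means for Fan's graph to be empty: since the vertices of $F$ are exactly the points of $\Arr$ of multiplicity at least three, an empty $F$ forces every intersection point of $\Arr$ to be a double point. Thus $\Arr$ is an arrangement of $n := |\Arr|$ lines in general position (no three concurrent), and its intersection lattice $L(\Arr)$ is the lattice of flats of the uniform matroid of rank $3$ on $n$ elements: it has the bottom $\CP^2$, the $n$ lines as atoms, and the $\binom{n}{2}$ distinct double points as coatoms, each covering exactly the two atoms meeting there.

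Next I would build $\mathcal{B}$ from single lines. Take $\ell$ to be the central, complexified-real arrangement in $\CC^2$ consisting of one real line, and form the iterated 3-generic direct sum $\mathcal{B} := D_3(\cdots D_3(D_3(\ell, \ell), \ell) \cdots, \ell)$ of $n$ copies of $\ell$. By the geometric description of the 3-generic direct sum recalled above, each application places the accumulated lines and the new line in general position in $\CP^2$, so after $n-1$ steps $\mathcal{B}$ is an arrangement of $n$ lines in which every intersection introduced is a double point and no three lines are concurrent. Because a single real line is complexified-real and the 3-generic direct sum preserves this property, $\mathcal{B}$ is complexified-real.

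Then I would compare lattices. Since $\mathcal{B}$ is itself a general-position arrangement of $n$ lines, $L(\mathcal{B})$ is again the rank-$3$ uniform matroid lattice on $n$ elements, so $L(\mathcal{B}) \cong L(\Arr)$ via the obvious isomorphism sending lines to lines and each double point to the unordered pair of lines meeting there. Invoking that $\Arr$ is nice (Theorem \ref{thm:DSFG-nice}) together with Theorem \ref{thm:Nice-Homeo}, this lattice isomorphism upgrades to a diffeomorphism $M(\mathcal{B}) \cong M(\Arr)$, which is exactly what the ambient construction requires of $\mathcal{B}$ in this case.

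The step I expect to carry the real weight is verifying that the iterated 3-generic direct sum genuinely realizes the general-position lattice rather than accidentally creating coincidences. This is where one must lean on the general-position clause built into the definition of $D_3$: because single lines contribute no internal multiple points, the only intersections in $\mathcal{B}$ are the transverse crossings forced between distinct blocks, and these are kept generic by construction, so no point of multiplicity at least three can appear and $F(\mathcal{B})$ is again empty. Once that is granted, the comparison of $L(\mathcal{B})$ with $L(\Arr)$ is routine, and the diffeomorphism follows formally from niceness.
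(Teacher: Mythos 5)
Your proof is correct and follows essentially the same route as the paper: an empty Fan graph forces a general-position arrangement, whose lattice (the rank-$3$ truncation of the Boolean lattice, i.e.\ the uniform matroid $U_{3,n}$) is realized by iterated $3$-generic direct sums of single lines, after which niceness and Theorem \ref{thm:Nice-Homeo} give the diffeomorphism. Your extra care in checking that the iterated $D_3$ construction introduces no triple points is a welcome elaboration of a step the paper leaves implicit, but it is not a different argument.
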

\begin{proof}
If the graph is empty, then $\Arr $ has no multiple points, i.e. all hyperplanes intersect in double points.   The lattice for the $\Arr $ is the rank three boolean lattice on $|\Arr |$ elements.   The arrangement may be realized by inductively applying 3-generic direct sums to the arrangement with defining polynomial $Q(\Arr)=x$ in $\CC^2$.  Denote the resulting arrangement by $\mathcal{B}$ and note that the lattices of $\Arr$ and $\mathcal{B}$ are isomorphic.
\end{proof}

\begin{lem}
\label{lem:tree-diffeo}
Suppose that $F$ is a tree, and every hyperplane has at least one multiple point.   Then $\mathcal{B} $ is constructible via a series of  3-truncated parallel connections.  
\end{lem}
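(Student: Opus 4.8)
The plan is to induct on the number $n$ of vertices of the tree $F$, at each stage peeling off the pencil of lines through a leaf and recognizing $\Arr$ as a (3-truncated) parallel connection of that pencil with a smaller arrangement of the same type. Since $\Arr$ has nice combinatorics by Theorem \ref{thm:DSFG-nice}, it suffices to realize the intersection lattice $L(\Arr)$ up to isomorphism, so the entire argument can be carried out combinatorially and then interpreted geometrically. For the base case $n=1$, the single multiple point $v$ is by hypothesis the only multiple point, and since every line carries a multiple point, every line passes through $v$; thus $\Arr$ is a pencil. A pencil of concurrent lines in $\CP^2$ has the same rank-two intersection lattice as a central arrangement of lines through the origin in $\CC^2$, so $\mathcal{B}$ may be taken to be this single central, complexified-real arrangement, with the series of parallel connections being empty.

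For the inductive step, fix a leaf $v$ of $F$, with unique incident edge joining $v$ to a vertex $w$ along a line $H$. The first step is to record the local structure forced by $v$ being a leaf: every edge at $v$ lies on a line through $v$ and joins $v$ to a further multiple point on that line, so each line $L_1,\dots,L_{m_v-1}$ of the pencil through $v$ other than $H$ carries no multiple point besides $v$, and consequently meets every line not through $v$ only in a double point. Set $\arA_v=\{H,L_1,\dots,L_{m_v-1}\}$, the pencil through $v$ viewed as a central arrangement in $\CC^2$, and set $\Arr''=\Arr\setminus\{L_1,\dots,L_{m_v-1}\}$. I would then check that $F(\Arr'')=F-v$: deleting the $L_i$ can only lower multiplicities, it destroys the multiple point $v$ (leaving only $H$ through that location) while leaving every other multiple point untouched (since each such point lies on no $L_i$), and on $H$ it removes the endpoint $v$ of the chain of multiple points, deleting exactly the edge $vw$. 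Hence $F(\Arr'')$ is a tree on $n-1$ vertices, and every line of $\Arr''$ still carries a multiple point, so the inductive hypothesis applies to $\Arr''$.

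The crux is to verify the lattice identity
\[
L(\Arr)\;\cong\;P\bigl((L(\arA_v),H),(L(\Arr''),H)\bigr),
\]
that is, that $\Arr$ genuinely decomposes as the parallel connection of $\arA_v$ with $\Arr''$ along the common line $H$. The rank-one flats match after identifying the two copies of $H$, and the description of the flats of $P$ as those $K$ with $K\cap\arA_v\in L(\arA_v)$ and $K\cap\Arr''\in L(\Arr'')$, together with the rank formula $r_P(K)=r_{\arA_v}(K\cap\arA_v)+r_{\Arr''}(K\cap\Arr'')-r(K\cap\{H\})$, is exactly matched by the geometry of $\Arr$: the only flats of $\Arr$ involving both an $L_i$ and a line of $\Arr''\setminus H$ are the double points guaranteed by the leaf property, while flats internal to the pencil or internal to $\Arr''$ are preserved on each side. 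Since $\arA_v$ has rank two and $\Arr''$ is realized in $\CC^3$, the raw parallel connection has rank four, so applying $T_3$ recovers a $\CP^2$ arrangement; as $\Arr$ has nice combinatorics this $3$-truncated parallel connection $TP_3((\arA_v,H),(\Arr'',H))$ is well defined up to diffeomorphism and is lattice isomorphic to $\Arr$. Combining the realization of $\Arr''$ supplied by the inductive hypothesis with this one further $3$-truncated parallel connection by the pencil $\arA_v$ exhibits $\mathcal{B}$ as required, with all building blocks central complexified-real arrangements in $\CC^2$.

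I expect the main obstacle to be the combinatorial verification of the parallel-connection identity, specifically showing that no unexpected higher flats arise between the non-base lines of the pencil and the lines of $\Arr''$; this is precisely where the hypothesis that $v$ is a leaf (so that each $L_i$ meets the rest of the arrangement only in double points) does the essential work, and care is needed to confirm that the rank function of $L(\Arr)$ agrees with the parallel-connection rank formula on every flat rather than merely on rank-one and rank-two flats.
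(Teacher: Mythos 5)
Your proposal is correct and follows essentially the same route as the paper: induct on the number of vertices of $F$, peel off the pencil through a leaf $v$, realize $\Arr$ as the $3$-truncated parallel connection of that pencil with the smaller arrangement along the line carrying the unique edge at $v$, and check the lattice isomorphism using the fact that the non-base lines of the pencil meet everything else only in double points. The flat-by-flat verification of the parallel-connection identity that you defer is exactly the computation the paper carries out explicitly (its three cases of flats and the rank formula), and your stated reason for why it works is the correct one.
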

\begin{proof}
 We proceed by induction on the number of multiple points, $k$, in the arrangement (i.e. vertices in $F(\Arr)$).
 
 If $k=1$, then, all hyperplanes meet at a single point.   Choosing homogeneous coordinates $[x:y:z]$, we may assume the arrangement is given by a defining polynomial $Q = zx(x-z)(x-2z)\cdots (x-(n-2)z)$ in $\CC^3$.

Let $\Arr' $  be the arrangement with defining polynomial $Q$ but considered as an arrangement in $\CC^2$.  Let $\Arr''$ be an arrangement in $\CC^2$ consisting of one hyperplane.   By forming the parallel connection along the hyperplanes given by $H'=Ker(z)$ and $H'' \in \Arr''$ we have
\begin{equation*}
\mathcal{B} = TP_3((\Arr' , H'), (\Arr'',H'')))
\end{equation*}
It is clear that $\mathcal{B}$ and $\Arr$ have isomorphic lattices.
 
 Now assume the theorem holds for $k \leq p$ and we will show it holds for $p+1$.    As the graph is a forest of trees, we may assume that there is a vertex $a$ such that $a$  has only one edge $e$ emanating from it.  The vertex $a$ has $m+1 \geq 3$ hyperplanes containing it, which we shall denote by $\{H_i \}_{i=0}^m$, letting $H_0$ denote the hyperplane corresponding to the edge $e$.   Consider the sub-arrangement $\Arr' = \Arr \setminus \{H_i\}_{i=1}^m$.   We may then associate to $\Arr'$ a graph $F'$ in the sense of Fan such that $F'$ is the graph $F$ with the vertex $a$ and edge $e$ removed.   
 
 The graph $F'$ has $p$ vertices and is a tree; therefore $\Arr'$ is constructible via 3-truncated parallel connection.
 
 Let $\Carr$ denote the arrangement in $\CC^2$ with defining polynomial $Q(\Carr) = x(x-y)(x-2y)\cdots(x-my)$.  Let $H_j^{\Carr} = \Ker(x - jy)$ We will now show that the intersection lattices of 
\begin{equation*}
\mathcal{B}=TP_3((\Carr , H_j^\Carr),(\Arr', H_0))
\end{equation*} and $\Arr $ are isomorphic.
 
We  first describe the elements of $L(\Arr)$.  The only rank 0 flat is the empty set, $L_0(\Arr)=\{\emptyset\}$.   The rank 1 flats are $\{H\}$ such that $H \in \Arr'$, and $\{H_i\}$ for $1 \leq i \leq m$.   The rank 2 flats are $B \in L_2(\Arr')$, $\{H_i\}_{i=0}^m$, and $\{H, H_i \}$ for all $H \in \Arr'$ and $1 \leq i \leq m$. (This follows from the fact that $a$ is the only multiple point in $\CP^2$ that the $H_i$ intersect in; therefore, each $H_i$ intersects any $H \in \Arr'$ in a double point).  Finally, the only rank 3 flat is the origin corresponding to the set of all hyperplanes.
 
 The flats of $\mathcal{B}$ can be determined from the flats of rank zero, one, and two arising in the parallel connection of $\Carr$ and $\Arr'$.    As the flats of $\Carr$ are $\emptyset$,  $\{H_0^{\Carr}\}, \dots \{H_m^{\Carr} \},$ and $\{H_0^{\Carr}, \dots, H_m^{\Carr}\}$, we have the flats of $T$ characterized by
\begin{align}
&\{K  : K \in  L(\Arr')  \} \\
&\{K \cup \{H_i^{'}\} : K \in  L(\Arr') , H_0 \notin K,  1 \leq i \leq m  \} \\
&\{K  \cup \{H_0^{'}, \dots, H_m^{'}\}: K \in  L(\Arr')  \}
\end{align}
such that the flat $K$ has rank zero, one, two.  Note that we are identifying $H_0 = H_0^\Carr$.

 Let $r_{\mathcal{B}}$,$r_{\Arr'}$, $r_\Carr$ be the respective rank functions of ${\mathcal{B}}, \Arr',$ and $\Carr$.    Then for any closed set $J$ in the parallel connection we have 
 
 \begin{equation*}
r_{\mathcal{B}}(J) := r_{\Arr'} (J \cap \Arr') + r_\Carr ( J \cap \Carr) - r_\Carr(J \cap \{H_0\})
\end{equation*}
 
 We now examine each case.   In case (1), we have that $r_{\Carr}(K \cap E_{\Carr}) - r_\Carr(K \cap {H_0}) = 0$.  If $H_0 \in K$, then $H_0^{\Carr} = H_0 \in K$. Therefore case (1) produces all elements of $L_i(\Arr')$ for $i=0,1,2$.
 
 For case (2) we have 
 
 \begin{align*}
r_{\mathcal{B}}(K \cup \{H_i^{\Carr}\}) & = r_{\Arr'} (K ) + r_\Carr ( \{H_i^{\Carr}\}) - r_\Carr(\emptyset) \\
 & = r_{\Arr'} (K )  + 1
\end{align*}

 Thus, $K$ may only be flats of rank zero or one in $\Arr'$, i.e. the empty set or consist of one hyperplane.
 
 Finally, in case (3) we have
 
 \begin{align*}
r_{\mathcal{B}}(K  \cup \{H_0^{\Carr}, \dots, H_m^{\Carr}\}) &= r_{\Arr'} (K \cup \{H_0\}) + r_\Carr (\{H_0^{\Carr}, \dots, H_m^{\Carr}\}) - r_\Carr(\{H_0^\Carr \}) \\
& = r_{\Arr'} (K \cup \{H_0\}) + 2 - 1 \\ &= r_{\Arr'} (K \cup \{H_0\})  + 1
\end{align*}
 
 Thus $K \cup \{H_0\}$ must have rank zero or one.   As the set is non-empty, it must have rank one and therefore $K = \emptyset$ or $K= \{H_0\}$.   
 
In summary, we have
 \begin{itemize}
 \item  (1) produces all flats of rank zero, one or two from $\Arr'$.
 \item (2) produces rank one flats of the form $\{H\}$ for $H \in \Arr'$ or $\{H_i^{\Carr}\}$ for $1 \leq i \leq m$.
 \item (3) produces  the rank two flat $\{H_0^{\Carr}, \dots, H_m^{\Carr}\}$.
 \end{itemize}
These are exactly the flats of $\Arr$ listed above with the same ranks, thus $L(\Arr)$ and $L(\mathcal{B})$ are  isomorphic and the lemma is proven.
 \end{proof}

\begin{lem}\label{lem:no-gp}Suppose that $F$ is a forest of trees, and every hyperplane has at least one multiple point.   Then $\mathcal{B} $ is constructible via a series of  3-truncated parallel connections and 3-generic direct sums. 
\end{lem}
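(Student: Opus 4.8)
The plan is to use that a forest is a disjoint union of trees, and that the sub-arrangements corresponding to distinct trees meet one another only in double points; the forest arrangement is then a $3$-generic direct sum of single-tree pieces, each handled by Lemma~\ref{lem:tree-diffeo}. Since, as already reduced before the first lemma, it suffices to produce a $\mathcal{B}$ built from the allowed operations with $L(\mathcal{B}) \cong L(\Arr)$, I need only establish such a lattice isomorphism.

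First I would write $F = T_1 \sqcup \cdots \sqcup T_s$ as its connected components, each a tree, and partition the lines of $\Arr$ by assigning to each line $H$ the component containing the multiple points lying on $H$. This is well defined because, in Fan's construction, the multiple points on a fixed line $H$ are joined consecutively by edges drawn along $H$, so they all lie in one component. Write $\Arr^{(i)}$ for the sub-arrangement of lines assigned to $T_i$. Two facts must be checked. First, any multiple point $p$ of $\Arr$ that lies in $T_i$ keeps its full multiplicity in $\Arr^{(i)}$, since every line through $p$ has $p$ among its multiple points and is therefore assigned to $T_i$; hence $F(\Arr^{(i)}) = T_i$ is a tree, every line of $\Arr^{(i)}$ carries a multiple point, and Lemma~\ref{lem:tree-diffeo} applies to $\Arr^{(i)}$. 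Second, a line of $\Arr^{(i)}$ and a line of $\Arr^{(j)}$ with $i \neq j$ can only meet in a double point: a multiple point of intersection would be a vertex of $F$ on both lines, forcing both to be assigned to one component.

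The second fact says exactly that the pieces are mutually in general position, so $L(\Arr) \cong T_3\!\left(L(\Arr^{(1)}) \oplus \cdots \oplus L(\Arr^{(s)})\right)$: each flat of $\Arr$ splits uniquely across the components with ranks adding in the direct sum, and the rank-three truncation accounts for the passage back to $\CP^2$. Invoking Lemma~\ref{lem:tree-diffeo}, I would choose for each $i$ an arrangement $\mathcal{B}_i$, constructed from central complexified-real arrangements in $\CC^2$ by $3$-truncated parallel connections, with $L(\mathcal{B}_i) \cong L(\Arr^{(i)})$. Setting $\mathcal{B} := D_3(\mathcal{B}_1, \dots, \mathcal{B}_s)$ and using that $D_3$ depends only on the intersection lattices of its inputs, I obtain $L(\mathcal{B}) \cong T_3\!\left(L(\mathcal{B}_1) \oplus \cdots \oplus L(\mathcal{B}_s)\right) \cong L(\Arr)$, and $\mathcal{B}$ is built solely from $3$-truncated parallel connections and $3$-generic direct sums, as required.

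I expect the main obstacle to be the identification $L(\Arr) \cong T_3\!\left(\bigoplus_i L(\Arr^{(i)})\right)$: one must verify that every flat of $\Arr$ decomposes across components and that the direct-sum rank function, after truncation to rank three, reproduces the rank function of $\Arr$ with no extra coincidences among the combined lines. This is precisely the point at which the ``only double points between pieces'' fact is indispensable, and it is what singles out the $3$-generic direct sum as the correct gluing operation.
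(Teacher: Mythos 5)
Your proposal is correct and follows essentially the same route as the paper: decompose the arrangement according to the connected components of Fan's graph, apply Lemma~\ref{lem:tree-diffeo} to each tree piece, observe that distinct pieces meet only in double points (hence are in general position), and realize the whole as a 3-generic direct sum of the pieces. You simply supply more detail than the paper's brief argument, in particular the well-definedness of the partition of lines and the lattice identification $L(\Arr)\cong T_3\bigl(\bigoplus_i L(\Arr^{(i)})\bigr)$.
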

\begin{proof}
Suppose that Fan's graph has multiple components and each hyperplane in the arrangement intersects one of these components non-trivially.   From the Lemma \ref{lem:tree-diffeo} each component is constructible via 3-truncated parallel connection.   Pairwise, the arrangements defining the components are in general position in $\CP^2$.   One can show that this is exactly the 3-generic direct sum of the arrangements corresponding to each component.   Letting this arrangement be denoted by $\mathcal{B}$, it is clear that $L(\mathcal{B})$ is isomorphic to $L(\Arr)$.
\end{proof}

\begin{proof}[Proof of Theorem \ref{thm:DSFG-Homeo}]
Let $\Arr'$ denote the sub-arrangement of $\Arr$ formed by removing all hyperplanes that only intersect the arrangement in double points, and let $m= |\Arr \setminus \Arr'|$.  By Lemma \ref{lem:no-gp}, one may build an arrangement $\mathcal{B}^{'}$ such that $L(\mathcal{B}_0^{'})$ and $L(\Arr')$ are isomorphic.   Let $\mathcal{C}$ denote the arrangement of one hyperplane in $\CC^1$ and let $\mathcal{B}_i^{'} = D_3(\mathcal{B}_{i-1}^{'}, \mathcal{C})$ for $i =1, \dots, m$.  Then setting $\mathcal{B} = \mathcal{B}_m^{'}$ one can see $L(\mathcal{B})$ is isomorphic to $L(\Arr)$ and the theorem is proven.\end{proof}

As each step in the construction may be performed in such a way as to preserve the real coefficients of the defining equations, we have the following corollary:

\begin{cor}
\label{cor:Real}
Let $\Arr$ be an arrangement in $\CP^2$ such that the fundamental group of $M(\Arr)$ is isomorphic to a direct sum of free groups.   Then, there exists an arrangement $\Arr'$ such that $\Arr'$ has a defining polynomial that factors into linear factors such that each factor has only real coefficients, and $M(\Arr)$ is diffeomorphic to $M(\Arr')$.
\end{cor}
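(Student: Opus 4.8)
The plan is to take the arrangement $\mathcal{B}$ produced by Theorem \ref{thm:DSFG-Homeo} and verify that it is itself a complexified-real arrangement, so that we may simply set $\Arr' = \mathcal{B}$. Recall that Theorem \ref{thm:DSFG-Homeo} furnishes $\mathcal{B}$ as the result of applying finitely many 3-truncated parallel connections and 3-generic direct sums to a sequence of central, complexified-real arrangements in $\CC^2$, together with a diffeomorphism $M(\mathcal{B}) \cong M(\Arr)$. Since a complexified-real arrangement is precisely one admitting a defining polynomial whose linear factors all have real coefficients, it suffices to show that the entire construction stays within the class of complexified-real arrangements; the desired diffeomorphism is then inherited directly from Theorem \ref{thm:DSFG-Homeo}.

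First I would set up an induction on the number of construction steps. The base case is immediate: the input arrangements are complexified-real by hypothesis, and the single-hyperplane arrangement used to absorb the double-point lines is trivially complexified-real. For the inductive step I would invoke the closure properties already recorded in the preceding subsections. Specifically, for the 3-generic direct sum, two complexified-real arrangements placed in general position in $\CP^2$ may be perturbed so that the result is again complexified-real; for the 3-truncated parallel connection, the identification along the base-point hyperplane can be carried out sufficiently far from the relevant intersection points to keep all defining forms real; and for the intervening truncations $T_3$, the generic hyperplanes may be chosen to preserve the real structure. Chaining these three observations along the construction tree shows that $\mathcal{B}$ is complexified-real, hence has a defining polynomial factoring into real linear forms.

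The only genuinely delicate point is the perturbation and identification arguments: one must check that the finitely many real perturbations needed to place sub-arrangements in general position, and the real choices of truncating hyperplanes, can be made simultaneously without disturbing the prescribed intersection lattice. This is exactly where the diffeomorphism-invariance built into $T_3$, $TP_3$, and $D_3$ does the work — each of these operations is defined only up to diffeomorphism type of the complement, so any sufficiently small generic perturbation taken over the reals realizes the same lattice and therefore the same diffeomorphism type of complement. Once this is granted, $\Arr' = \mathcal{B}$ is a complexified-real arrangement with $M(\Arr)$ diffeomorphic to $M(\Arr')$, and the corollary follows. I expect this real-genericity bookkeeping, rather than any new idea, to be the main obstacle, since it must be threaded consistently through every step of the recursive construction.
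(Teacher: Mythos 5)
Your proposal is correct and follows essentially the same route as the paper: the paper's proof is precisely the observation that each step of the construction in Theorem \ref{thm:DSFG-Homeo} (the real choices of truncating hyperplanes for $T_3$, the identification far from intersection points for $TP_3$, and real general position for $D_3$) can be carried out so as to preserve real coefficients, so $\Arr' = \mathcal{B}$ works. You have simply made explicit the induction over construction steps that the paper leaves implicit.
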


\begin{remark} It is currently unknown if all arrangements with nice combinatorics have real representations with diffeomorphic complements.  Using techniques of geometric addition and geometric multiplication, one may construct an arrangement with a connected representation space, but no representation with only real coefficients. See \cite{White-CombGeoMR921064} for the techniques and use $x^2 + y^2 = -1$ as the desired initial variety for a representation space. \end{remark}

\begin{remark}
\label{clm:combed-graph}
In the next section we will want a particular representation for an arrangement.     Let $\Arr$ be an arrangement in $\CP^2$ such that $F(\Arr)$ is a tree.   Let any line be the line at infinity, and consider the induced arrangement in $\CC^2$ denoted by $d\Arr$.  Using Corollary \ref{cor:Real}, we may assume that $d\Arr$ is a complexified-real arrangement, thus may be depicted in the real plane given by standard $x$ and $y$ coordinates, with no vertical lines.   By abuse of notation, let $F(d\Arr)$ be the graph on this arrangement following the same rules given for Fan's graph in $\CP^2$.   By Theorem \ref{thm:DSFG-Homeo}, any modification to the position of the lines may be made as long as the intersection lattice is not altered.

We will show that a representation may be chosen such that for any multiple point of the arrangement that is not an endpoint or isolated point of $F(d\Arr)$ there exists a multiple point with larger $x$ value.    The claim easily holds for arrangements with only one multiple point.    

Suppose $d\Arr$ has $k+1$ multiple points and $m$ is non-isolated multiple point that is an endpoint of the graph.   Let $\{H_1, \dots, H_p\}$ be the lines intersecting at $m$ with $H_0$ containing more than one multiple point.   The sub-arrangement $\mathcal{B}^{'} = d\Arr \setminus \bigcup_{i=2}^p \{H_i\}$ has $k$ multiple points, and thus by induction there is an arrangement $\mathcal{B}$ with the same intersection lattice as $\mathcal{B}^{'}$, containing the corresponding line $H_1$, and none of the lines in $\mathcal{B}$ are vertical.  

Let $P$ be the set of points on the arrangement of multiplicity at least two.   One may choose a real number $N$ such that for all points in $N$, the $x$-coordinate of the point is less than $N$.    We now let $q$ be the point on the line $H_1$ with $x$-coordinate equal to $N+1$.   Choose lines $H_2, \dots, H_p$ such that each line has positive slope, and for all $H \in \mathcal{B}$, $H \cap H_i$ has multiplicity two and the multiple points on $H_i$ have $x$-coordinates greater than $N$.   

The resulting arrangement in $\CP^2$ has the same intersection lattice as $\Arr$, hence the complements of the arrangements are homeomorphic.\end{remark}

\begin{figure}[h!]
\begin{center} 
\begin{tabular}{cc}

		\begin{tikzpicture}[scale=0.3]
			 \draw (0,0) to (6,4);
			 	\draw (0,4)  to  (6,0);
			 	\draw (0,2)  to  (6,2);
			 	
			 	\draw (0,3) to (6,3);
			 	\draw (1,0)  to (5,6);
			 	\draw (1,6)  to (5,0);
				\end{tikzpicture}

	 \hspace{1cm} & \hspace{1cm}
	\begin{tikzpicture}[scale=0.3]
	
			\pgfsetcornersarced{\pgfpoint{5mm}{5mm}}
		
		 \pgfsetcornersarced{\pgfpointorigin}
		 \draw (-1.5,-1) to (7.5,5);
		\draw (-1.5,5)  to  (7.5,-1);
		
		\draw (-1.5,2)  to  (7.5,2);
		\draw (-1.5,3) to (7.5,3);
		
		\draw (4.1777777,-1)  to (6.5,6);
		\draw (6.83333,-1) to (4.5,6) ;

		\end{tikzpicture}
\end{tabular}
		\end{center}
				\caption{These arrangements have diffeomorphic complements and isomorphic lattices.  The preferred arrangement is on the right.}
											\label{fig:pref}
					\end{figure}
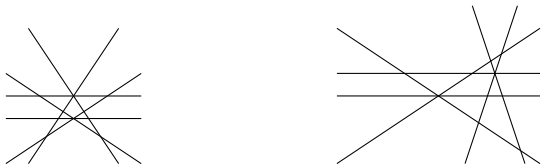

\section{Homotopy Type}
\label{sec:homotopytype}
%
%

\begin{lem}
\label{lem:DS-iso-he}
Any two affine nodal arrangements with isomorphic fundamental groups have diffeomorphic complements.
\end{lem}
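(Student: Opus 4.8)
The plan is to reduce the statement to the combinatorial rigidity theorem of Jiang and Yau (Theorem~\ref{thm:Nice-Homeo}) by showing that isomorphic fundamental groups force isomorphic intersection lattices. Let $\Arr_1$ and $\Arr_2$ be affine nodal arrangements, of types $\mathbf{m} = (m_1, \dots, m_r)$ and $\mathbf{m}' = (m'_1, \dots, m'_s)$ respectively. By the computation of Subsection~\ref{subsec:ANA} we have $\pi_1(M(\Arr_1)) \cong \bigoplus_{i=1}^r \FF_{m_i}$ and $\pi_1(M(\Arr_2)) \cong \bigoplus_{j=1}^s \FF_{m'_j}$, and these two groups are isomorphic by hypothesis. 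For each $k$, the projective arrangement obtained from $\Arr_k$ by adjoining the line at infinity is nice, since its complement has fundamental group a direct sum of free groups (Theorem~\ref{thm:DSFG-nice}); moreover the affine complement $M(\Arr_k) = \CC^2 \setminus \bigcup \Arr_k$ coincides with the projective complement of that arrangement. Hence it suffices to prove $L(\Arr_1) \cong L(\Arr_2)$, after which the diffeomorphism of complements follows from Theorem~\ref{thm:Nice-Homeo}.

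The heart of the argument is to recover the type from the isomorphism class of the group. First I would observe that the intersection lattice of an affine nodal arrangement is determined, up to isomorphism, by the unordered multiset of parallel-class sizes $\{m_1, \dots, m_r\}$: projectively, the arrangement consists of the line at infinity together with $r$ pencils meeting it in points of multiplicities $m_i + 1$, with all remaining intersections being nodes, and a bijection matching the parts of two equal multisets extends to a lattice isomorphism. Thus the lemma reduces to the purely group-theoretic claim that $\bigoplus_{i=1}^r \FF_{m_i} \cong \bigoplus_{j=1}^s \FF_{m'_j}$ forces $\{m_i\} = \{m'_j\}$ as multisets.

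To establish this claim I would pass to the cohomology of the group $G := \pi_1(M(\Arr_1)) \cong \pi_1(M(\Arr_2))$, which is an invariant of $G$ alone. By the K\"unneth formula over a field, together with the fact that $H^0(\FF_m;\mathbb{Q}) = \mathbb{Q}$, $H^1(\FF_m;\mathbb{Q}) = \mathbb{Q}^m$, and $H^i(\FF_m;\mathbb{Q}) = 0$ for $i \geq 2$, the rational Poincar\'e polynomial of $H^*(G;\mathbb{Q})$ equals
\[
\prod_{i=1}^r (1 + m_i t) \;=\; \prod_{j=1}^s (1 + m'_j t).
\]
Each factor $1 + m_i t$ is linear with root $-1/m_i$, and distinct positive integers give distinct roots, so unique factorization in $\mathbb{Q}[t]$ forces the multisets $\{m_i\}$ and $\{m'_j\}$ to coincide (in particular $r = s$). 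Combined with the previous paragraph this yields $L(\Arr_1) \cong L(\Arr_2)$ and completes the reduction.

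The main obstacle is precisely this recovery of the type from the abstract group, since a direct sum of free groups can a priori be re-decomposed (for instance all rank-one factors merge in the abelianization). The Poincar\'e polynomial sidesteps any delicate Krull--Schmidt bookkeeping, because unique factorization of a product of linear forms is automatic and only the hypothesis on $\pi_1$ is used. As an alternative to this last step one could invoke the rigidity of right-angled Artin groups, noting that $\bigoplus_{i} \FF_{m_i}$ is the right-angled Artin group on the complete multipartite graph $K_{m_1,\dots,m_r}$ and that such graphs are determined by their part sizes; but the cohomological route is self-contained.
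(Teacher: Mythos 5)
Your proof is correct and follows the same route as the paper: observe that affine nodal arrangements are nice and that the fundamental group determines the intersection lattice, then invoke Theorem~\ref{thm:Nice-Homeo}. The only difference is that the paper simply asserts the middle step, whereas you justify it via the Poincar\'e polynomial $\prod_i(1+m_it)$ of group cohomology and unique factorization in $\mathbb{Q}[t]$ --- a clean and correct way to recover the multiset $\{m_i\}$ from the abstract group.
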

\begin{proof}An affine nodal arrangement has nice combinatorics,   and  the intersection lattice  is determined by the fundamental group.    Therefore by Theorem \ref{thm:Nice-Homeo}, the complements of the arrangements are diffeomorphic.
\end{proof}

\begin{thm}\label{thm:affine-dsfg}  Let $\Arr$ be an arrangement in $\CP^2$ such that $\pi_1(M(\Arr))$ is isomorphic to a direct sum of free groups..  Then $M(\Arr)$ has the homotopy type of the complement of an affine nodal arrangement.
\end{thm}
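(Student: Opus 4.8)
The plan is to prove the statement by comparing Randell--Arvola presentation complexes. Write $G = \pi_1(M(\Arr))$ as a direct sum of free groups, $G \cong \bigoplus_{i=1}^{r}\FF_{m_i}$, and let $\Arr_0$ be an affine nodal arrangement of type $(m_1,\dots,m_r)$; by the realization lemma of Section \ref{sec:arr-DSFG} we have $\pi_1(M(\Arr_0)) \cong G$, and by the lemma computing its presentation $\Arr_0$ has a Randell--Arvola presentation all of whose relators are commutators. The goal is to show that the standard $2$--complex $C_0$ of this commutator presentation is homotopy equivalent to the standard $2$--complex $C$ of a Randell--Arvola presentation of $\Arr$; by Theorem \ref{thm:complex-homequiv} this yields $M(\Arr) \simeq C \simeq C_0 \simeq M(\Arr_0)$, which is the assertion.

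First I would normalize $\Arr$. By Theorem \ref{thm:Fan-DSFG} the graph $F(\Arr)$ is a forest, and by Corollary \ref{cor:Real} and Theorem \ref{thm:DSFG-Homeo} I may replace $\Arr$ by a diffeomorphic complexified--real arrangement; choosing a generic line at infinity, I pass to a real affine picture $d\Arr$ in the $(x,y)$--plane with no vertical lines. Using Remark \ref{clm:combed-graph} I may further assume the arrangement is \emph{combed}, so that the multiple points occur monotonically in the $x$--coordinate and the point of largest $x$--coordinate is necessarily a leaf of $F(\Arr)$. It suffices to treat the case in which $F(\Arr)$ is a single tree: lines meeting the rest of the arrangement only in double points contribute free $\FF_1$ factors and, on the arrangement side, correspond to appending entries equal to $1$ to the type (this is the $3$--generic direct sum step of Lemma \ref{lem:no-gp}), so the general forest case follows by concatenating the single--tree analyses.

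With this normalization I would compute the braid--monodromy (Randell--Arvola) presentation of $\pi_1(M(\Arr))$ directly from the combed picture, reading the local relations off the multiple points in order of increasing $x$--coordinate, and then transform it into the commutator presentation of $\Arr_0$ using \emph{only} the homotopy--type--preserving transformations (i)--(iii) of Section \ref{sec:arr-DSFG}, never invoking (iv). I would induct on the number of multiple points, peeling off the leaf $v$ of $F(\Arr)$ of largest $x$--coordinate. Because the representation is combed, the meridians of the lines through $v$ other than the edge--line meet the rest of the configuration only in double points, so the conjugating words produced by the braid monodromy can be absorbed by moves of type (i) and the local relations can be rewritten as commutators by moves of type (iii), exhibiting these meridians as a free factor that commutes with the meridians of the remaining lines; the residual presentation is that of the sub-arrangement obtained by deleting these lines, which is again combed with one fewer multiple point, so the inductive hypothesis applies.

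Since every transformation used is of type (i), (ii) or (iii), the homotopy type of the standard $2$--complex is preserved at each step, giving $C \simeq C_0$ and hence the theorem; in particular the Euler characteristics of $C$ and $C_0$ agree, which forces $b_2(M(\Arr)) = \sum_{i<j} m_i m_j$ as an automatic consistency check rather than as a separate input. I expect the main obstacle to be precisely the monodromy bookkeeping in the inductive step: a multiple point of multiplicity $k$ contributes a full twist on $k$ strands, whose relations are a priori iterated conjugates of commutators rather than bare commutators, and I must show that in the combed configuration every spurious conjugating word can be cleared using only (i)--(iii). Avoiding transformation (iv) is essential, since a single insertion of a redundant relator would wedge on a copy of $S^2$ and destroy the homotopy equivalence; the combed representation of Remark \ref{clm:combed-graph} is exactly the device that prevents conjugations from accumulating across successive multiple points and so keeps every step within (i)--(iii).
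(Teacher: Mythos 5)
Your proposal follows essentially the same route as the paper: normalize to a combed complexified-real representation via Corollary \ref{cor:Real} and Remark \ref{clm:combed-graph}, induct on the number of multiple points by peeling off a leaf of Fan's graph, and convert the Randell--Arvola presentation into the commutator presentation of an affine nodal arrangement using only transformations (i)--(iii). The one step you flag as the main obstacle --- turning the cyclic multiple-point relations at the leaf into bare commutators without invoking (iv) --- is resolved in the paper by a type (ii)-inverse/type (ii) pair that introduces a new generator $b_0$ with relator $b_0(h_1 h_j \cdots h_2)^{-1}$ and then eliminates $h_1$, after which the residual relators are exactly those of $\mathcal{B} \cup \{H_1\}$ with $h_1$ renamed $b_0$ and the remaining meridians split off as a commuting free factor.
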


\begin{proof}     We will show that the only transformations needed to change a Randell-Arvola presentation for $\pi_1(M(\Arr))$ into a Randell-Arvola presentation for an affine nodal arrangement are transformations of types $(i), (ii),$ and $(iii)$.   As these do not change the homotopy type of the associated 2-complex, this will show that $M(\Arr)$ has the homotopy type of the complement of an affine nodal arrangement.

By Theorem \ref{thm:DSFG-nice} and Corollary \ref{cor:Real}, we know that $\Arr$ has nice combinatorics and may assume that $\Arr$ is a complexified real arrangement.   Further, we choose the representation for $\Arr$ as described in Remark \ref{clm:combed-graph}.  Let $F=F(\Arr)$ be Fan's graph for the arrangement $\Arr$.   We will proceed by induction on the number of vertices in $F$.

Suppose $F$ has no vertices.   Then any two lines must intersect in only double points, and any two lines must intersect.   By Zariski's theorem, $\pi_1(M(\Arr))$ is isomorphic to the free abelian group on $n=|\Arr| -1$ generators.   Thus, $\Arr$ is an affine nodal arrangement of type $(m_1,\dots, m_n)$ for $m_i=1$ for all $i$.

Suppose $F$ has $k$ vertices and the theorem holds for all arrangements with less than $k$ vertices.    Let $d\Arr$ be a decone of the arrangement with respect to any line.    There are two possible cases to consider:  (1)the arrangement $d\Arr$ has no multiple points, or (2) $d\Arr$ has a multiple point that is an endpoint of an edge on the graph $F$ or an isolated point.   (Suppose not.   Then all multiple points are connected by two paths to vertices that lie on line at infinity.   As these two vertices both lie on the line at infinity, they are the same point or connected by an edge.   In either case, we have formed a cycle, contradicting graph being a forest.)

In case (1), as the decone has no multiple points, any two lines intersect in double points or not at all.   Therefore, $d\Arr$ is an affine nodal arrangement, and we are done.

In case (2), there are two possibilities:
\begin{itemize}
\item The graph has an isolated point.  Let $m$ denote the multiple point and let $H_1,\dots, H_j$ be the lines intersecting at $m$.
\item The graph has an edge with an endpoint. Let $m$ denote the multiple point and let $H_1,\dots, H_j$ be the lines intersecting at $m$ with $H_1$ denoting the line with more than one multiple point.   
\end{itemize}
Let $\mathcal{H} = \{H_1, \cdots, H_j\}$ and $\mathcal{B} = d\Arr \setminus \mathcal{H}$.  In either possibility, the lines $H_2, \dots, H_j$ intersect $\mathcal{B}$ in only double points. By Remark \ref{clm:combed-graph} we may assume that the multiple point $m$ and double points induced by the $H_i$ occur away from the rest of the arrangement.  


Therefore, after possibly performing some transformations of types $(i)$ and $(iii)$ we have a Randell-Arvola presentation given by 
\begin{equation*}
\langle h_1, \dots, h_j, b_1, \dots, b_k : [h_i, b_l], [h_1, h_2, \dots, h_j], R_\mathcal{B} \rangle 
\end{equation*}
where $2 \leq i \leq j, 1 \leq l \leq k$ and $R_\mathcal{B}$ are relators from the Randell-Arvola presentation for $\mathcal{B} \cup \{H_1\}$.   One should also note that any word in $R_\mathcal{B}$ is written in terms of $b_i$'s and $h_1$, and $h_i$ is a generator for a small loop around $H_i$ and $b_l$ is a generator for some line in $\mathcal{B}$.   Finally, we note that the bracket $[h_1, h_2, \dots, h_j]$ stands for the relations 
\begin{equation*}
h_j h_{j-1} \cdots h_1 = h_1 h_j \cdots h_2 = \cdots = h_{j-1} \cdots h_1 h_{j-1}
\end{equation*}

We apply a transformation of type $(ii)$ inverse by adding a generator $b_0$ and relator of the form $b_0( h_1 h_j \cdots h_2)^{-1}$, and replacing every occurrence of $ h_1 h_j \cdots h_2$ by $b_0$.   Next, apply a transformation of type $(ii)$ by deleting $h_1$ and the relator $h_1(b_0 h_2^{-1}h_3^{-1} \cdots h_{j-1}^{-1})^{-1}$.   We finish the transformation by replacing every occurrence of $h_1$ by $(b_0 h_2^{-1}h_3^{-1} \cdots h_{j-1}^{-1})$.  The resulting presentation is
\begin{equation*}
\langle h_2, \dots, h_j, b_0, b_1, \dots, b_k : [h_i, b_l], [b_0 h_2^{-1}h_3^{-1} \cdots h_{j-1}^{-1}, h_2, \dots, h_j], R_{\mathcal{B}, b_0, h_i} \rangle 
\end{equation*}

Applying transformations of types $(i)$ and $(iii)$ to $[b_0 h_2^{-1}h_3^{-1} \cdots h_{j-1}^{-1}, h_2, \dots, h_j]$ results in commutators $[b_0, h_i]$ for $2 \leq i \leq j$.   As all $h_i$'s commute with all $b_l$'s, all of the $h_i$'s may be removed from relators in $R_{\mathcal{B}, b_0, h_i}$ via transformation of types $(i)$ and $(iii)$.   The end result is a set of relators $R_{\mathcal{B}, b_0}$ that are identical to the relators in $R_{\mathcal{B}}$ except the letter $h_1$ has been changed to the letter $b_0$.   

Therefore, we have a presentation of the form
\begin{equation*}
P: = \langle h_2, \dots, h_j, b_0, b_1, \dots, b_k : [h_i, b_l], R_{\mathcal{B}, b_0} \rangle 
\end{equation*}
where $2 \leq i \leq j, 0 \leq l \leq k$ and $R_{\mathcal{B}, b_0}$ is a set of relators that do not involve any $h_i$.   Further, $\langle b_0, \dots, b_k : R_{\mathcal{B}, b_0} \rangle$ is the Randell-Arvola presentation for the arrangement $\mathcal{B} \cup \{H_1\}$.   When embedded into $\CP^2$ by adding the line at infinity, Fan's graph is a forest of trees with $k$ vertices.   Therefore, by the induction hypothesis there is sequence of transformations of types $(i)$, $(ii)$ and $(iii)$ that will change the presentation into a presentation of an affine nodal arrangement.    

Transformations of type $(i)$ and $(iii)$ will have no effect on the relators $[h_i, b_l]$, however, we must examine type $(ii)$.   Any move of this type will take the form of replacing a $b_l$ with a new generator $d$ written in terms of other $b_p$'s.   This transformation will result in a commutator of the form $[h_i, d(b_{n_1}\dotsb_{n_q})]$.   These may be transformed to $[h_i, d]$ by moves of type $(i)$ and $(iii)$ as the $h_i$ and $b_p$'s all commute.   The end result will simply be a change of letter for these commutators.

Thus, by following the sequence of transformation given by the subarrangement $\mathcal{B} \cup \{H_1\}$, the presentation $P$ may be transformed to a presentation of the same form as an affine nodal arrangement.   The transformations do not change the homotopy type of the complex, thus, $M(\Arr)$ has the homotopy type of an affine nodal arrangement.
\end{proof}

\begin{cor}(Theorem \ref{thm:homotopyThm} from the Introduction)  Let $\Arr_1$ and $\Arr_2$ be arrangements in $\CP^2$ such that $\pi_1(M(\Arr_1)) \cong \pi_1(M(\Arr_2)$ and $\pi_1(M(\Arr_1))$ is isomorphic to a direct sum of free groups.  Then $M(\Arr_1)$ and $M(\Arr_2)$ are homotopy equivalent.
\end{cor}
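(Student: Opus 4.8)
The plan is to reduce the statement to the affine nodal case, where Lemma~\ref{lem:DS-iso-he} already settles the matter, using Theorem~\ref{thm:affine-dsfg} as the bridge. Since $\pi_1(M(\Arr_1)) \cong \pi_1(M(\Arr_2))$ and the former is isomorphic to a direct sum of free groups, both fundamental groups are direct sums of free groups, so Theorem~\ref{thm:affine-dsfg} applies to each arrangement. First I would invoke that theorem to produce affine nodal arrangements $\mathcal{N}_1$ and $\mathcal{N}_2$ with $M(\Arr_1) \simeq M(\mathcal{N}_1)$ and $M(\Arr_2) \simeq M(\mathcal{N}_2)$.

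Next I would transport the hypothesis across these homotopy equivalences. A homotopy equivalence induces an isomorphism on fundamental groups, so $\pi_1(M(\mathcal{N}_1)) \cong \pi_1(M(\Arr_1))$ and $\pi_1(M(\mathcal{N}_2)) \cong \pi_1(M(\Arr_2))$; combined with the hypothesis $\pi_1(M(\Arr_1)) \cong \pi_1(M(\Arr_2))$, this yields $\pi_1(M(\mathcal{N}_1)) \cong \pi_1(M(\mathcal{N}_2))$. Thus $\mathcal{N}_1$ and $\mathcal{N}_2$ are two affine nodal arrangements with isomorphic fundamental groups, and Lemma~\ref{lem:DS-iso-he} gives that $M(\mathcal{N}_1)$ and $M(\mathcal{N}_2)$ are diffeomorphic, hence homotopy equivalent.

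Finally I would chain the equivalences, $M(\Arr_1) \simeq M(\mathcal{N}_1) \simeq M(\mathcal{N}_2) \simeq M(\Arr_2)$, and since homotopy equivalence is transitive this proves the corollary. There is no real obstacle remaining at this stage: all the substantive work lives in Theorem~\ref{thm:affine-dsfg}, whose proof rewrites the Randell--Arvola presentation using only the homotopy-preserving Tietze moves $(i)$, $(ii)$, and $(iii)$. The one point worth stating explicitly is that the ``direct sum of free groups'' hypothesis genuinely applies to both $\Arr_1$ and $\Arr_2$, not merely to $\Arr_1$; this is immediate from the assumed isomorphism $\pi_1(M(\Arr_1)) \cong \pi_1(M(\Arr_2))$, which is what licenses the simultaneous application of Theorem~\ref{thm:affine-dsfg} to each arrangement.
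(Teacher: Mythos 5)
Your proof is correct and follows essentially the same route as the paper: reduce both arrangements to affine nodal ones via Theorem~\ref{thm:affine-dsfg} and then identify those via Lemma~\ref{lem:DS-iso-he}. If anything, you are slightly more careful than the paper's own two-line proof, which picks a single intermediate affine nodal arrangement $\Arr_3$ and leaves the appeal to Lemma~\ref{lem:DS-iso-he} implicit.
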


\begin{proof} 
Let $\Arr_3$ be an arrangement in $\CP^2$ such that $\Arr_3$ is an affine nodal arrangement and $\pi_1(M(\Arr_3)) \cong \pi_1(M(\Arr_1))$.    By Theorem \ref{thm:affine-dsfg}, $M(\Arr_1)$ is homotopy equivalent to $M(\Arr_3)$, and $M(\Arr_2)$ is homotopy equivalent to $M(\Arr_3)$.   Thus, we have the theorem.
\end{proof}

\section{Examples}
\label{sec:examples}
%
%

\begin{example} In \cite{Falk-homotopy-types-MR1193601}, two arrangements in $\CC^3$ were given with defining polynomials $Q(\Arr_1) = (x+y)(x-y)y(x+z)(x-z)z$ and $R(\Arr_2) = (x+z)(x-z)z(y+z)(y-z)(x-y-z)$.   Deconing with respect to the hyperplane defined by $z=0$ yields the affine arrangements depicted in Figure \ref{fig:exFalk}.

\begin{figure}[h!]
\begin{center}
	\begin{tikzpicture}[scale=0.3]

		\pgfsetcornersarced{\pgfpoint{5mm}{5mm}}
	
	 \pgfsetcornersarced{\pgfpointorigin}
	 \draw (0.2,5.8) to (5.8,0.2);
	\draw (0.2,0.2)  to  (5.8,5.8);
	\draw (0,3)  to  (6,3);
	\draw (2,0)  to (2,6);
	\draw (4,0)  to (4,6);
	
	\draw (3,-1) node [below] {$d\Arr_1$};
	
		 \draw (11,0) to (16,5);
		\draw (10,2)  to  (16,2);
		\draw (10,4) to (16,4);
		\draw (12,0)  to (12,6);
		\draw (14,0)  to (14,6);
		\draw (13,-1) node [below] {$d\Arr_2$};
		\end{tikzpicture}
		\caption{Arrangements from \cite{Falk-homotopy-types-MR1193601} that have homotopy equivalent complements.}
			\label{fig:exFalk}
\end{center}
			\end{figure}
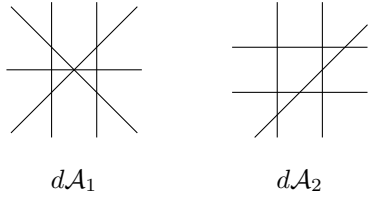

These arrangements are the first in an infinite family of pairs of arrangements in $\CC^3$ such that each pair has homotopy equivalent complements.   However, for each pair, the fundamental group of the complement is isomorphic to $\ZZ \oplus \ZZ \oplus \FF_p \oplus \FF_q$ where $\FF_n$ is the free group on $n$ generators.  In the example given, $\pi_1(M(\Arr_1)) \cong \ZZ \oplus \ZZ \oplus \FF_2 \oplus \FF_2$.

\end{example}

\begin{example} Consider the arrangements depicted in Figure \ref{fig:example_mine-CorrectForm}.   We will show explicitly the sequence of transformations between the Randell-Arvola presentations of the fundamental groups.

\begin{figure}[h!]
\begin{center} 
\begin{tabular}{cc}
	\begin{tikzpicture}[scale=0.3]

		\pgfsetcornersarced{\pgfpoint{5mm}{5mm}}
	
	 \pgfsetcornersarced{\pgfpointorigin}
	 \draw (-1.5,-1) to (7.5,5) node [right] {$c$};
	\draw (-1.5,5)  to  (7.5,-1) node [right] {$d$};
	
	\draw (-1.5,2)  to  (7.5,2) node [right] {$w$};
	\draw (-1.5,3) to (7.5,3) node [right] {$v$};
	
	\draw (4.1777777,-1)  to (6.5,6) node [right] {$a$};
	\draw (6.83333,-1) node [below] {$b$} to (4.5,6) ;
	
	\draw (3,-1) node [below] {$d\Arr_1$};
	\end{tikzpicture}
	 \hspace{1cm} & \hspace{1cm}
		\begin{tikzpicture}[scale=0.3]
		 \draw (1,0) to (6,5) node [right] {$d$}; 
		 \draw (0,1) to (5,6) node [right] {$c$};
		\draw (0,2)  to  (6,2)  node [right] {$f$};
		\draw (0,4) to (6,4) node [right] {$e$};
		\draw (2,0)  to (2,6)  node [above] {$a$};
		\draw (4,0)  to (4,6)  node [above] {$b$};
		\draw (3,-1) node [below] {$d\Arr_2$};
		\end{tikzpicture}
\end{tabular}
		\end{center}
				\caption{Arrangements that have homotopy equivalent complements.}
											\label{fig:example_mine-CorrectForm}
					\end{figure}
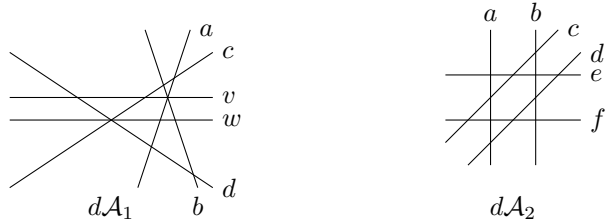
The presentations are given by 
\begin{align*}
\pi_1(M(d \arr_1)) \cong &  \langle a,b,c,d,e,f :& [a,c], [a,d], [a,e], [a,f], [b,c], [b,d], \\
 & & [b,e], [b,f], [c,e], [c,f], [d,e], [d,f] \rangle \\ 
\pi_1(M(d \arr_2)) \cong  & \langle a,b,c,d,v,w :& [b,d], [a,c], [b,w], [a,v,b], [b,c], \\
 & & [a,w], [c,v^b], [a,d], [c,w,d], [d,v^b] \rangle
\end{align*}
We first employ transformations of types (i) and (iii) to remove the conjugations from the commutators (i.e. replace $[c,v^b]$ by $[c,v]$ and replace $[d,v]$ by $[d,v]$ as $[b,c]$ and $[b,d]$ are relations).    

We next apply a transformation of type (ii) and type (ii) inverse by adding the generator $e$ and relation $e=vba$, then removing the generator $v$ by rewriting $v=eb^{-1}a^{-1}$.   Finally, apply a transformation of type (ii) and type (ii) inverse using the substitution $f=wxu$ and removing $w$ via the relation $w=fu^{-1}x^{-1}$.   This will result in the presentation given for $\pi_1(M(d \arr_1))$.    
\end{example}

\begin{example}
 Consider the arrangements defined by the polynomials 
 \begin{align*}
 Q(\Arr_1) &= xyz(y-2x)(x+z)(y+z)(2x-3y-6z)(4x-5y-10z) \\
 Q(\Arr_2) &= xyz(y-2x)(x+z)(y+z)(2x-3y-6z)(3x-4y-\frac{17}{2}z).
 \end{align*} In Figure \ref{fig:non-free} we have depicted the decones of the arrangements with respect to $z=0$.

\begin{figure}[h!]
\begin{center}
	\begin{tikzpicture}[scale=0.6]
		\pgfsetcornersarced{\pgfpoint{5mm}{5mm}}
	
	 \pgfsetcornersarced{\pgfpointorigin}
	    \draw (-2,-4) to (0.5,1);
		\draw (0,-4) to (0,1);
		\draw (-1,-4) to (-1,1);
		\draw (-3,0) to (4.5,0);
		\draw (-3,-1) to (4.5,-1);
		\draw (-3,-4) to (4.5,1);
			\draw (-2.5,-4) to (3.75,1);
	\draw (0,-5) node [below] {$d\Arr_1$};
	
	    \draw (8,-4) to (10.5,1);
	    		\draw (10,-4) to (10,1);
	    		\draw (9,-4) to (9,1);
	    		\draw (7,0) to (14.5,0);
	    		\draw (7,-1) to (14.5,-1);
	    		\draw (7,-4) to (14.5,1);
	    			\draw (7.5,-4) to (14.1777777,1);
	    	\draw (10,-5) node [below] {$d\Arr_2$};
		\end{tikzpicture}
		\end{center}
						\caption{Arrangements that have homotopy equivalent complements.   The fundamental group of the complement is not a direct sum of free groups.}
																	\label{fig:non-free}
						
		\end{figure}
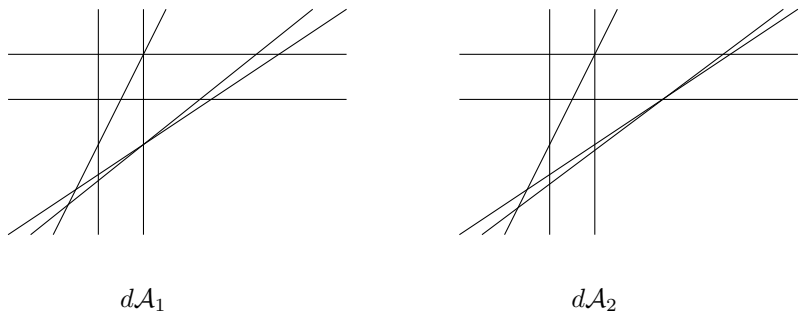

These arrangements arise as two dimensional generic sections of a pair of combinatorially distinct yet diffeomorphic arrangements in $\CC^4$ by techniques in \cite{EF-OS-Algebras-MR1719136}.   Therefore, by the Lefschetz theorem we know that the fundamental groups are isomorphic.

By using transformations of types $(i)$, $(ii)$, and $(iii)$ one may see that the Randell-Arvola presentations for the fundamental groups are equivalent, thus the complements of the arrangements are homotopy equivalent.   However, by Theorem \ref{thm:Fan-DSFG} the fundamental group is not a direct sum of free groups.  
					
\end{example}

\providecommand{\bysame}{\leavevmode\hbox to3em{\hrulefill}\thinspace}
\providecommand{\MR}{\relax\ifhmode\unskip\space\fi MR }
\providecommand{\MRhref}[2]{%
  \href{http://www.ams.org/mathscinet-getitem?mr=#1}{#2}
}
\providecommand{\href}[2]{#2}

\end{document}